\newtheorem{theorem}{Theorem}[section]
\newtheorem{lemma}[theorem]{Lemma}
\newtheorem{definition}[theorem]{Definition}
\newtheorem{proposition}[theorem]{Proposition}
\newtheorem{corollary}[theorem]{Corollary}
\newtheorem{question}[theorem]{Question}
\theoremstyle{remark}
\newtheorem{remark}[theorem]{Remark}
\theoremstyle{definition}
\newcommand\pf{\begin{proof}}
\newcommand\epf{\end{proof}}
\newcommand{\Z}{{\mathbb Z}}
\newcommand{\C}{{\mathbb C}}
\newcommand\id{\operatorname{id}}
\newcommand\ad{\operatorname{ad}}
\newcommand\co{\operatorname{co}}
\newcommand\End{\operatorname{End}}
\newcommand\GL{\operatorname{GL}}
\newcommand\SU{\operatorname{SU}}
\newcommand\SO{\operatorname{SO}}
\DeclareMathOperator{\Ker}{Ker}
\numberwithin{equation}{section}
\title{Examples of inner linear Hopf algebras}
\author{Nicol\'as Andruskiewitsch}
\address{Nicol\'as Andruskiewitsch:
Facultad de Matem\'atica, Astronom\'ia y F\'isica, Universidad National de C\'ordoba.
CIEM - CONICET. Medina Allende s/n (5000) Ciudad Universitaria, C\'ordoba, Argentina}
\email{andrus@famaf.unc.edu.ar}
\author{Julien Bichon}
\address{Julien Bichon:
Clermont Universit\'e, Universit\'e Blaise Pascal, Laboratoire de Math\'ematiques CNRS UMR 6620,
63177 Aubi\`ere, France }
\email{Julien.Bichon@math.univ-bpclermont.fr}
\thanks{Work supported by the ANR project ''Galoisint'' BLAN07-3\_183390}
\begin{document}

\maketitle

\begin{abstract}
The notion of inner linear Hopf algebra is a generalization
of the notion of discrete linear group. In this paper,
we prove two general results that enable us to enlarge the class of Hopf algebras
that are known to be inner linear:
the first one is a characterization by using the Hopf dual, while
the second one is a stability result under extensions.
We also discuss the related notion of inner unitary Hopf $*$-algebra.
\end{abstract}

\section{introduction}
Throughout the paper, we work over $\C$, the field of complex numbers.

The notion of inner linear Hopf algebra was introduced in \cite{bb}
as a natural generalization of the notion of discrete linear group.
The precise definition is the following one.

\begin{definition}
A Hopf algebra is said to be inner linear if it contains an
ideal of finite codimension that does not contain any non-zero Hopf ideal.
\end{definition}

Indeed, when $H= \C[\Gamma]$ is the group algebra of a discrete group $\Gamma$, then
$H$ is inner linear if and only if the group $\Gamma$ is linear in the usual sense, i.e. admits
a faithful finite-dimensional linear representation \cite{bb}.

Also the concept of inner linearity for Hopf algebras generalizes the
notion of linear (= finite dimensional) Lie algebra: if $H=U(\mathfrak g)$
is the enveloping algebra of a Lie algebra $\mathfrak g$, then $U(\mathfrak{g})$
is inner linear if and only if $\mathfrak g$ is finite-dimensional.

We believe that the problem to know whether a given Hopf algebra
is inner linear or not is an important one, since it is a generalization of
the celebrated linearity problem for discrete groups.

Several examples were considered in \cite{bb}, and we continue
this study here.
We prove two general results that enable us to enlarge the class of Hopf algebras
that are known to be inner linear.

\begin{enumerate}
 \item We give a reformulation of inner linearity using the Hopf dual.
 This enables us to show that
 the Drinfeld-Jimbo quantum algebras attached to semisimple Lie algebras are inner linear
 if the parameter $q$ is not a root of unity.

\item We prove a stability result for inner linearity under extensions.
This applies to Drinfeld-Jimbo algebras and quantized function algebras at roots of unity, and
to the recently introduced half-liberated orthogonal Hopf algebras \cite{bs}.
\end{enumerate}

The paper is organized as follows.
In Section 2 we reformulate the notion of inner linear Hopf algebra by using
the Hopf dual,  with, as an application,
the inner linearity of the Drinfeld-Jimbo quantum algebras $U_q(\mathfrak g)$
and $\mathcal O_q(G)$ if $q$ is not a root of unity.
 Section 3 contains some basic results
on the possible use of quotient Hopf algebras to show inner linearity, which
 might be used to show the inner linearity
of Hopf algebras having an analogue of the dense big cell of reductive
algebraic groups, such as in \cite{pw}.
 In Section 4 we give a
 stability result for inner linearity under extensions.
 This applies to quantum algebras $\mathcal O_q(G)$ and $U_q(\mathfrak g)$ at roots of unity,
 as well as to the half-liberated Hopf algebras $A_o^*(n)$ from \cite{bs}.
In Section 5 we study the related notion
 of inner unitary Hopf $*$-algebra, and it is shown
 that for $q \in \mathbb R^*$, $q\not=\pm 1$, and $K$ a connected simply connected
 simple compact Lie group, the Hopf $*$-algebra $\mathcal O_q(K)$
 is not inner unitary (while it is inner linear as a Hopf algebra).
 We also give a Hopf $*$-algebra version of the extension theorem of Section 4.

We assume that the reader is familiar with the basic notions of Hopf algebras,
for which \cite{mo} is a convenient reference.
Our terminology and notation are the standard ones: in particular, for a Hopf algebra, $\Delta$, $\varepsilon$ and $S$ denote the comultiplication, counit and antipode, respectively.

\section{Inner linear Hopf algebras and the Hopf dual}

In this section we reformulate the notion of linear Hopf algebra by using
the Hopf dual, and we apply this reformulation to Drinfeld-Jimbo quantum algebras.

\begin{theorem}\label{reformulation}
A Hopf algebra $H$ is inner linear if and only if $H^0$, the Hopf dual of $H$,
contains a finitely generated Hopf subalgebra that separates the points of $H$.
\end{theorem}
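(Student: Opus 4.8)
The plan is to pass, in both directions, between ideals of finite codimension in $H$ and finite-dimensional subcoalgebras of $H^0$ — the latter being coefficient coalgebras of finite-dimensional $H$-modules — and to use repeatedly that the left radical of a Hopf pairing is a Hopf ideal. Two reductions are used throughout. First, if $I\subseteq H$ has finite codimension, so does the largest two-sided ideal $N=\operatorname{Ann}_H(H/I)$ it contains, and a Hopf ideal lies in $I$ if and only if it lies in $N$; hence $H$ is inner linear if and only if it has a two-sided ideal $N$ of finite codimension containing no nonzero Hopf ideal. Given such $N$, put $V=H/N$ and let $C=C_V\subseteq H^0$ be its coefficient coalgebra: a finite-dimensional subcoalgebra with $\bigcap_{\varphi\in C}\ker\varphi=N$. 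Second, a finite-dimensional subcoalgebra $D\subseteq H^0$ yields an algebra map $\pi_D\colon H\to D^{*}$, $h\mapsto(\varphi\mapsto\varphi(h))$ (multiplicative since $D$ is a subcoalgebra and $D^{*}$ carries the convolution product), whose kernel $\bigcap_{\varphi\in D}\ker\varphi$ has finite codimension.

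For the ``if'' direction, let $B\subseteq H^0$ be a finitely generated Hopf subalgebra separating the points of $H$. By the fundamental theorem of coalgebras, finitely many algebra generators of $B$ lie in a common finite-dimensional subcoalgebra $D\subseteq B$, which therefore generates $B$ as an algebra. Put $I=\ker\pi_D$, of finite codimension. If $K\subseteq I$ is a Hopf ideal of $H$, then every element of $D$ vanishes on $K$; since the functionals in $H^0$ vanishing on $K$ form a subalgebra of $H^0$ containing $D$, they contain all of $B=\langle D\rangle_{\mathrm{alg}}$, and therefore $K=0$ because $B$ separates points. Thus $I$ contains no nonzero Hopf ideal and $H$ is inner linear.

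For the ``only if'' direction, take $N$, $V=H/N$ and $C=C_V$ as above and let $B$ be the Hopf subalgebra of $H^0$ generated by $C$. The evaluation pairing $H\times B\to\C$ is a Hopf pairing, so its left radical $\bigcap_{\varphi\in B}\ker\varphi$ is a Hopf ideal of $H$; since $C\subseteq B$, this radical is contained in $\bigcap_{\varphi\in C}\ker\varphi=N$, hence is zero, i.e. $B$ separates the points of $H$. It remains to show that $B$ is finitely generated, and this is where I expect the real difficulty to lie: $B$ is generated as an algebra by $\bigcup_{k\ge 0}S^{k}(C)$, the coefficient coalgebras of $V,V^{*},V^{**},\dots$, so one must prove that the increasing chain of subalgebras generated by $C+S(C)+\dots+S^{k}(C)$ stabilizes. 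I would approach this by analysing how the square of the antipode of $H^0$ permutes the finitely many simple subcoalgebras underlying $C$, or by first replacing $V$ by a larger finite-dimensional module $W$ chosen so that the subalgebra of $H^0$ generated by $C_W+S(C_W)$ is already a Hopf subalgebra — which still separates points by the same radical argument, since $\operatorname{Ann}_H(W)\subseteq N$.
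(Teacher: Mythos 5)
Your argument is essentially the paper's proof rewritten in the language of coefficient coalgebras and Hopf pairings: the paper invokes Proposition 2.2 of \cite{bb}, which says that the largest Hopf ideal contained in $\Ker(\pi)$ is $\bigcap_{g}\Ker(\pi^g)$, and that intersection is exactly the radical of your evaluation pairing between $H$ and the Hopf subalgebra of $H^0$ generated by the coefficient coalgebra of $V$. The one genuine problem is that you have misread the finiteness hypothesis. As the paper states immediately after the theorem, ``finitely generated'' means finitely generated \emph{as a Hopf algebra}: the smallest Hopf subalgebra containing a finite set, equivalently containing a finite-dimensional subcoalgebra $D$, namely the subalgebra generated by $\bigcup_{k\geq 0}S^k(D)$. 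Nobody asks for finite generation as an algebra. Consequently the ``real difficulty'' you locate at the end of the ``only if'' direction is not there: your $B$ is by construction the Hopf subalgebra generated by the finite-dimensional coalgebra $C_V$, hence finitely generated as a Hopf algebra, and that direction of your proof is already complete. Whether the chain of subalgebras generated by $C+S(C)+\cdots+S^k(C)$ stabilizes is indeed delicate, but the theorem does not require you to settle it.

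Under the correct reading, your ``if'' direction needs one small repair. A finite-dimensional subcoalgebra $D$ containing a finite set of Hopf algebra generators of $B$ only generates $B$ as a Hopf algebra, so $B=\langle D\rangle_{\mathrm{alg}}$ may fail, and observing that the functionals vanishing on the Hopf ideal $K$ form a subalgebra containing $D$ is not quite enough. The fix is immediate: that set of functionals is a subalgebra of $H^0$ because $K$ is a coideal, and it is moreover stable under the antipode of $H^0$ (precomposition with $S$) because $S(K)\subseteq K$. Any antipode-stable subalgebra of $H^0$ containing $D$ contains the subalgebra generated by $\bigcup_{k\geq 0}S^k(D)$, which is the Hopf subalgebra generated by $D$, i.e.\ all of $B$. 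Hence $K\subseteq\bigcap_{\varphi\in B}\ker\varphi=(0)$, and the rest of your argument stands.
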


Of course finitely generated Hopf algebra means
finitely generated as a \textsl{Hopf algebra}.
Before proving Theorem \ref{reformulation},
we need to recall the concept of inner faithful representation  of a Hopf algebra \cite{bb},
which generalizes the notion of faithful representation of a discrete group.

\begin{definition}
Let $H$ be a Hopf algebra and let $A$ be an algebra. A representation $\pi : H \longrightarrow A$ is said to be inner faithful if
$\Ker(\pi)$ does not contain any non-zero Hopf ideal.
\end{definition}

It is clear that a Hopf algebra is inner linear if and only
it admits a finite-dimensional  inner faithful representation.

\begin{proof}[Proof of theorem \ref{reformulation}]
We first introduce some notation.
Let $F$ be the free monoid generated by the set $\mathbb N$, with its
generators denoted $\alpha_0, \alpha_1 ,\ldots$ and its unit element denoted $1$.
Let $V$ be a vector space.
To any element $g \in F$, we associate a vector space $V^g$, defined inductively on the length
of $g$ as follows.
We put $V^1=k$, $V^{\alpha_k} = V^{*\cdots * (k \ {\rm times})}$ (so that $V^{\alpha_0} = V, V^{\alpha_1} = V^*$...). Now for $g,h \in F$ with $l(g) > 1$ and $l(h)>1$, we put
$V^{gh}= V^g \otimes V^h$. For $g \in F$, we have, if $V$ is finite dimensional, canonical isomorphisms
$\End(V^g) \cong \End(V)^g$, where the algebra ${\rm End}(V)^g$ is defined as in
Section 2 of \cite{bb}.

Now let $\pi : H \longrightarrow \End(V)$ be a representation, with $V$ finite dimensional, so that
$V$ is an $H$-module. For any $g \in F$, the standard procedure gives
an $H$-module structure on $V^g$, and we identify, via the algebra isomorphism
$\End(V^g) \cong \End(V)^g$, the corresponding algebra map
$H \longrightarrow \End(V^g)$ with $\pi^g : H \longrightarrow \End(V)^g$
as defined in Section 2 of \cite{bb}.

We know, from Proposition 2.2 in \cite{bb},
that the largest Hopf ideal contained in $\Ker(\pi)$ is $I_\pi= \cap_{g \in F} \Ker(\pi^g)$.
Hence $\pi$ is inner faithful if and only $\cap_{g \in F} \Ker(\pi^g)=(0)$.

 Thus if $\pi$ is inner faithful, we have $\cap_{g \in F} \Ker(\pi^g)=(0)$. The coefficients
of the representation $\pi^g$ belong, by construction, to $L$, the Hopf
subalgebra of $H^0$ generated by the coefficients of $\pi = \pi^{\alpha_0}$. Hence $L$
separates the points of $H$.

The proof of the first implication in the theorem follows.
If $H$ is inner linear, let $\pi : H \longrightarrow A$ be
an inner faithful representation, with $A$ finite-dimensional. We can assume, by using the regular representation of $A$, that $A={\rm End}(V)$ for some finite-dimensional
vector space $V$. Hence the finitely generated Hopf subalgebra $L \subset H^0$
constructed above separates the points of $H$, by the previous discussion.

Conversely, assume that we have a  finitely generated Hopf subalgebra $L \subset H^0$
that separates the points of $H$. Then $L$ is generated by the coefficients
of a finite dimensional $H^0$-comodule, corresponding to a finite dimensional
$H$-module $V$. Let $\pi : H \longrightarrow \End(V)$ be the corresponding algebra map.
The elements of $L$ are the coefficients of the $H$-modules $V^g$ constructed above,
hence $\cap_{g \in F} \Ker(\pi^g) = (0)$ since $L$ separates the points of $H$, and we conclude
that $\pi$ is inner faithful, so that $H$ is inner linear.
\end{proof}

\begin{corollary}\label{corosep}
Let $H$ be a Hopf algebra such that $H^0$ separates the points of $H$.
\begin{enumerate}
\item Assume that $H^0$ is finitely generated as a Hopf algebra. Then
$H$ is inner linear.
\item Assume that $H$ is finitely generated as a Hopf algebra. Then
$H^0$ is inner linear.
\end{enumerate}
\end{corollary}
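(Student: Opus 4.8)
The plan is to deduce both statements directly from Theorem \ref{reformulation}, so that essentially no new work is needed beyond the right bookkeeping.

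For part (1), I would simply take $L = H^0$ itself as the Hopf subalgebra of $H^0$ required by Theorem \ref{reformulation}: by hypothesis it is finitely generated as a Hopf algebra and it separates the points of $H$, so the ``if'' direction of Theorem \ref{reformulation} applies verbatim and $H$ is inner linear. There is nothing further to check here.

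For part (2), I would apply Theorem \ref{reformulation} to the Hopf algebra $H^0$ in place of $H$, so that it suffices to exhibit a finitely generated Hopf subalgebra of $(H^0)^0$ separating the points of $H^0$. The natural candidate is the image of the canonical evaluation map $\iota : H \longrightarrow (H^0)^0$, $h \mapsto (f \mapsto f(h))$. First I would recall (or verify) that $\iota$ is a well-defined morphism of Hopf algebras: the point is that for $h \in H$ the functional $f \mapsto f(h)$ is representative on the \emph{algebra} $H^0$, with comultiplication given by the finite sum $\sum \iota(h_1)\otimes\iota(h_2)$, so that $\iota(h)$ indeed lies in $(H^0)^0$ and not merely in $(H^0)^*$; multiplicativity of $\iota$ then follows from the fact that the multiplication of $H$ is dual to the comultiplication of $H^0$, and compatibility with counit and antipode is automatic. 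Consequently $\iota(H)$ is a Hopf subalgebra of $(H^0)^0$, and since $H$ is finitely generated as a Hopf algebra, its image $\iota(H)$ is finitely generated as well, being the smallest Hopf subalgebra containing the images of a finite generating set of $H$. Finally, $\iota(H)$ separates the points of $H^0$ for the trivial reason that if $f\in H^0$ satisfies $f(h)=0$ for all $h\in H$ then $f=0$. Theorem \ref{reformulation} then gives that $H^0$ is inner linear.

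The argument is purely formal; the only step requiring a little care is the verification that $\iota$ is a morphism of Hopf algebras, i.e. that its image genuinely lands in the Hopf dual $(H^0)^0$, but this is the standard double-dual computation. I would also remark that part (2) does not actually use the hypothesis that $H^0$ separates the points of $H$ — only finite generation of $H$ is needed — whereas part (1) uses both hypotheses on $H^0$.
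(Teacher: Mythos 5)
Your proof is correct and follows essentially the same route as the paper: part (1) is Theorem \ref{reformulation} applied verbatim, and part (2) applies Theorem \ref{reformulation} to $H^0$ via the canonical evaluation map $H \to (H^0)^0$, exactly as the paper does (the paper phrases this as the embedding $H \subset (H^0)^0$, invoking the separation hypothesis only to get injectivity). Your observation that one only needs the image of the evaluation map—so that part (2) in fact does not require $H^0$ to separate the points of $H$—is a valid minor refinement of the paper's argument.
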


\begin{proof}
The first asssertion follows from the previous theorem.
For the second one, consider the embedding
$H \subset (H^0)^0$ (this is indeed an embedding since $H^0$
separates the points of $H$). Then $H$ is a finitely generated Hopf
subalgebra of $(H^0)^0$, and we conclude by the previous theorem.
\end{proof}

As an application, we get the following result for quantum groups at generic $q$; the case
when $q$ is a root of 1 will be discussed in Section 4.

\begin{theorem}
Let $G$ be a (complex) connected, simply connected, semisimple algebraic
group with Lie algebra $\mathfrak g$, and
let $q\in \C^*$. If $q$ is not a root of unity, then
the Hopf algebras $U_q(\mathfrak{g})$ and $\mathcal O_q(G)$
are inner linear.
\end{theorem}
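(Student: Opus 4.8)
The plan is to deduce both statements from Theorem \ref{reformulation} by exploiting the standard Hopf algebra pairing $\langle-,-\rangle\colon\mathcal O_q(G)\times U_q(\mathfrak g)\to\C$, defined by evaluating the matrix coefficients of the finite-dimensional type-$1$ modules against the action of $U_q(\mathfrak g)$. Two well-known properties of this pairing at a non-root-of-unity $q$ will be used. First, both $\mathcal O_q(G)$ and $U_q(\mathfrak g)$ are finitely generated Hopf algebras: $\mathcal O_q(G)$ is generated as an algebra by the matrix coefficients of the fundamental representations, and $U_q(\mathfrak g)$ by the generators $E_i,F_i,K_i^{\pm1}$. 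Second, the pairing is non-degenerate on both sides: non-degeneracy on the $\mathcal O_q(G)$-side is immediate, since $\mathcal O_q(G)$ is by construction a subspace of $U_q(\mathfrak g)^*$, whereas non-degeneracy on the $U_q(\mathfrak g)$-side is precisely the assertion that the finite-dimensional type-$1$ modules separate the points of $U_q(\mathfrak g)$.

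Being a Hopf pairing, $\langle-,-\rangle$ induces morphisms of Hopf algebras $\mathcal O_q(G)\to U_q(\mathfrak g)^0$ and $U_q(\mathfrak g)\to\mathcal O_q(G)^0$; by the non-degeneracy statements above, the first one is injective. For $U_q(\mathfrak g)$: the image of $\mathcal O_q(G)$ is then a finitely generated Hopf subalgebra of $U_q(\mathfrak g)^0$ which separates the points of $U_q(\mathfrak g)$ (separation being again non-degeneracy of the pairing on the $U_q(\mathfrak g)$-side), so Theorem \ref{reformulation} applies. For $\mathcal O_q(G)$: the image of $U_q(\mathfrak g)$ is a Hopf subalgebra of $\mathcal O_q(G)^0$ which is finitely generated, being a quotient of the finitely generated Hopf algebra $U_q(\mathfrak g)$, and which separates the points of $\mathcal O_q(G)$ (because the pairing is non-degenerate on the $\mathcal O_q(G)$-side); Theorem \ref{reformulation} again yields the conclusion.

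The argument is thus formal once Theorem \ref{reformulation} is at hand. The one place where genuine input from the representation theory of quantum groups at generic $q$ is needed — and which I expect to be the main point to justify carefully or to cite — is the non-degeneracy of the pairing on the $U_q(\mathfrak g)$-side, i.e. the fact that $U_q(\mathfrak g)$ acts faithfully on the direct sum of its finite-dimensional type-$1$ modules; finite generation of $\mathcal O_q(G)$ as an algebra is the other standard fact being invoked.
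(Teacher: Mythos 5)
Your proof is correct, and for $U_q(\mathfrak g)$ it is exactly the paper's argument: the span of matrix coefficients of the type-$1$ modules (i.e. $\mathcal O_q(G)$ viewed inside $U_q(\mathfrak g)^0$) is a finitely generated Hopf subalgebra separating points, the deep input being the Joseph--Letzter separation result, and Theorem \ref{reformulation} concludes. For $\mathcal O_q(G)$ you deviate slightly: you apply Theorem \ref{reformulation} directly, using the Hopf algebra map $U_q(\mathfrak g)\to\mathcal O_q(G)^0$ induced by the pairing, whose image is finitely generated (a quotient of $U_q(\mathfrak g)$) and separates points of $\mathcal O_q(G)$ by the trivial side of the non-degeneracy; the paper instead invokes Corollary \ref{corosep}(2) to get that $U_q(\mathfrak g)^0$ itself is inner linear and then passes to the Hopf subalgebra $\mathcal O_q(G)$, which implicitly uses the (easy, via the criterion $I_\pi=\cap_g\Ker(\pi^g)$) fact that inner linearity is inherited by Hopf subalgebras. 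The two deductions are formally equivalent consequences of the same reformulation and the same representation-theoretic facts; if anything, your version is a bit more self-contained, since it avoids the heredity-to-Hopf-subalgebras step, while the paper's version makes the extra observation that the full Hopf dual $U_q(\mathfrak g)^0$ is inner linear.
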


\begin{proof}
We know from the representation theory of $U_q(\mathfrak{g})$
(see \cite[Lemma 8.3]{jole})
that the type I irreducible representations of $U_q(\mathfrak g)$
separate the points of $U_q(\mathfrak{g})$.
Hence the linear span of their coefficients separates
the elements of $U_q(\mathfrak g)$, and  is a finitely generated
Hopf subalgebra of $U_q(\mathfrak{g})^0$ (actually it is $\mathcal O_q(G)$,
see \cite{jan2,bg}). It follows from Theorem \ref{reformulation}
that $U_q(\mathfrak g)$ is inner linear.
The second part of the previous corollary  ensures that $U_q(\mathfrak{g})^0$ is inner linear, and
hence so is the Hopf subalgebra   $\mathcal O_q(G)$.
\end{proof}

\begin{remark}
 The proof uses the fact that $U_q(\mathfrak{g})^0$ separates
 the points of $U_q(\mathfrak{g})$,  a deep result in the  representation
theory of $U_q(\mathfrak{g})$. On the other hand, to prove this separation result,
it might be simpler to combine Theorem \ref{reformulation}
and  the inner faithfulness criterion for representations
of pointed Hopf algebras in \cite{bb} (Theorem 4.1).
This last theorem is as follows: a pointed Hopf algebra $H$
is inner linear if and only if there exists a finite-dimensional
representation $\pi : H \longrightarrow A$ such that for any group-like $g \in {\rm Gr}(H)$, the restriction map
$\pi_{|\mathcal P_{g,1}(H)} : \mathcal P_{g,1}(H) \longrightarrow A$ is injective.
A similar  idea to prove separation results, using description of skew-primitives, was already used in \cite{cm}, Section 4.
\end{remark}


\section{Inner linearity and quotient Hopf algebras}

In this section we prove some very basic but useful results
on the possible use of quotient Hopf algebras to show inner linearity.
We begin with a  lemma.

\begin{lemma}
 Let $H$ be a Hopf algebra, let $I_1,I_2$ be Hopf ideals in $H$.
 Let $\rho_k : H/I_k \longrightarrow A$, $k=1,2$,
 be some representations. Consider the representation
 \begin{align*}
\rho  : H &\longrightarrow A \times B \\
x &\longmapsto (\rho_1\circ \pi_1(x) , \rho_2\circ \pi_2(x))
 \end{align*}
 where $\pi_k$, $k=1,2$ is the canonical projection.
 Assume that $I_1 \cap I_2$ does not contain any
 non zero Hopf ideal and that $\rho_1$ and $\rho_2$ are inner faithful.
 Then $\rho$ is inner faithful.
\end{lemma}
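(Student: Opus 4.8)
The plan is to compute the largest Hopf ideal contained in $\Ker(\rho)$ and show it is zero, using the description of such ideals as intersections of kernels of the ``tensor powers'' $\pi^g$, $g \in F$, recalled in the proof of Theorem \ref{reformulation}. First I would observe that $\Ker(\rho) = \Ker(\rho_1 \circ \pi_1) \cap \Ker(\rho_2 \circ \pi_2)$, and that for $k = 1, 2$ we have $\Ker(\rho_k \circ \pi_k) \supseteq I_k$, with $\Ker(\rho_k \circ \pi_k)/I_k = \Ker(\rho_k)$ inside $H/I_k$. The key point is that the construction $g \mapsto (-)^g$ is compatible with the Hopf algebra surjections $\pi_k$: if we set $\rho = \rho^{\alpha_0}$ and form $\rho^g : H \to (A \times B)^g$ as in the proof of Theorem \ref{reformulation}, then because $A \times B = A \times B$ is a product and the module $V \oplus W$ underlying $\rho$ decomposes, $\rho^g$ is built from $(\rho_1 \circ \pi_1)^g$ and $(\rho_2 \circ \pi_2)^g$; and each $(\rho_k \circ \pi_k)^g$ factors as $\rho_k^g \circ \pi_k$ since $\pi_k$ is a morphism of Hopf algebras and the functor $(-)^g$ on modules is defined purely in terms of the Hopf structure (duals, tensor products, using $\Delta$ and $S$).

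Granting this, the computation runs as follows. By Proposition 2.2 of \cite{bb} (as recalled above), the largest Hopf ideal $I_\rho \subseteq \Ker(\rho)$ equals $\cap_{g \in F} \Ker(\rho^g)$. Since the underlying module of $\rho$ is the direct sum of those of $\rho_1 \circ \pi_1$ and $\rho_2 \circ \pi_2$, one checks that $\Ker(\rho^g) = \Ker((\rho_1\circ\pi_1)^g) \cap \Ker((\rho_2\circ\pi_2)^g)$ for each $g$. Hence
\[
I_\rho = \Big(\bigcap_{g \in F}\Ker((\rho_1\circ\pi_1)^g)\Big) \cap \Big(\bigcap_{g \in F}\Ker((\rho_2\circ\pi_2)^g)\Big).
\]
Now $\bigcap_{g}\Ker((\rho_k\circ\pi_k)^g)$ is the largest Hopf ideal of $H$ contained in $\Ker(\rho_k \circ \pi_k)$; since this kernel contains $I_k$, that largest Hopf ideal contains $I_k$ (as $I_k$ is itself a Hopf ideal), but it is also contained in $I_k$ because, using $(\rho_k \circ \pi_k)^g = \rho_k^g \circ \pi_k$, we get $\bigcap_g \Ker((\rho_k\circ\pi_k)^g) = \pi_k^{-1}\big(\bigcap_g \Ker(\rho_k^g)\big) = \pi_k^{-1}(I_{\rho_k}) = \pi_k^{-1}(0) = I_k$, where $I_{\rho_k} = 0$ because $\rho_k$ is inner faithful. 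Therefore $I_\rho = I_1 \cap I_2$, which by hypothesis contains no non-zero Hopf ideal; but $I_\rho$ is itself a Hopf ideal, so $I_\rho = 0$, i.e. $\rho$ is inner faithful.

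The step I expect to be the main obstacle is the bookkeeping identity $(\rho_k \circ \pi_k)^g = \rho_k^g \circ \pi_k$, and more precisely making sure the ambient algebras match up: the construction of $\pi^g$ in \cite{bb} lands in $\End(V)^g$, and one must verify that passing through a Hopf surjection $\pi_k : H \to H/I_k$ the $g$-th power construction is functorial, so that $\rho^g$ for the ``doubled'' representation on $V \oplus W$ genuinely decomposes as the pair of the $\rho_k^g \circ \pi_k$. This is essentially a naturality statement for the functor $M \mapsto M^g$ on finite-dimensional $H$-modules with respect to restriction along Hopf morphisms, combined with its additivity (it sends direct sums to a subalgebra of the product of the pieces, enough to compute the joint kernel). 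Once this is in place the rest is formal manipulation of kernels and the defining property of inner faithfulness.
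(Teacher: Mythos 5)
Your argument is correct in substance, but it takes a much heavier route than the paper, which proves the lemma in two lines without any Hopf-image machinery: given a Hopf ideal $J \subset \Ker(\rho) = \Ker(\rho_1\circ\pi_1)\cap\Ker(\rho_2\circ\pi_2)$, the image $\pi_k(J)$ is a Hopf ideal of $H/I_k$ (as $\pi_k$ is a surjective Hopf algebra map) contained in $\Ker(\rho_k)$, so $\pi_k(J)=0$ by inner faithfulness, i.e.\ $J \subset I_k$ for $k=1,2$; hence $J \subset I_1\cap I_2$ and $J=(0)$. Pushing the ideal forward rather than pulling kernels back avoids everything you list as the ``main obstacle'': no appeal to Proposition 2.2 of \cite{bb}, no naturality of $(-)^g$ under Hopf surjections or under the projections $A\times B \to A,B$, and no finite-dimensionality or module decomposition $V\oplus W$ --- note that in the lemma $A$ and $B$ are arbitrary algebras, so if you insist on your route you should work with the algebras $A^g$, $B^g$, $(A\times B)^g$ of \cite{bb} directly rather than with underlying modules. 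Two small corrections to your write-up: the naturality identities you need, $(\rho_k\circ\pi_k)^g=\rho_k^g\circ\pi_k$ and $\sigma^g = p^g\circ\rho^g$ for the projections $p$, are indeed routine (both constructions are composites of iterated $\Delta$, $S$ and the given algebra maps), but the claimed equality $\Ker(\rho^g)=\Ker((\rho_1\circ\pi_1)^g)\cap\Ker((\rho_2\circ\pi_2)^g)$ is only justified as an inclusion $\subset$, since $(A\times B)^g$ contains mixed tensor factors not seen by the two pieces separately; consequently you only get $I_\rho \subset I_1\cap I_2$ rather than equality (and indeed $I_1\cap I_2$ need not be a Hopf ideal, so equality should not be expected). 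This does not affect your conclusion, because $I_\rho$ is a Hopf ideal contained in $I_1\cap I_2$ and is therefore zero, which is exactly what inner faithfulness of $\rho$ means.
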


\begin{proof}
 Let $J$ be a Hopf ideal contained in $\Ker(\rho) = \Ker(\rho_1 \circ \pi_1) \cap \Ker(\rho_2 \circ \pi_2)$. Then for $k=1,2$, $\pi_k(J)$  is a Hopf ideal contained in $\Ker(\rho_k)$, and
 hence $J \subset I_k$ by inner faithfulness of $\rho_k$. Hence
 $J$ is a Hopf ideal in $I_1 \cap I_2$, and $J=(0)$, which proves that $\rho$ is inner faithful.
\end{proof}

\begin{corollary}
 Let $H$ be a Hopf algebra, let $I_1,I_2$ be Hopf ideals in $H$. Assume
 that $I_1 \cap I_2$ does not contain any non zero Hopf ideal and that the Hopf algebras
 $H/I_1$ and $H/I_2$ are inner linear. Then $H$ is inner linear.
\end{corollary}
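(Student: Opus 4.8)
The plan is to deduce this corollary directly from the preceding lemma, essentially by choosing appropriate target algebras. First I would invoke the hypothesis that $H/I_1$ and $H/I_2$ are inner linear: by the remark following the definition of inner faithful representation, this means there exist finite-dimensional algebras $A$ and $B$ together with inner faithful representations $\rho_1 : H/I_1 \longrightarrow A$ and $\rho_2 : H/I_2 \longrightarrow B$. Then I would form the representation $\rho : H \longrightarrow A \times B$ given by $\rho(x) = (\rho_1 \circ \pi_1(x), \rho_2 \circ \pi_2(x))$, exactly as in the lemma, where $\pi_k : H \longrightarrow H/I_k$ is the canonical projection.

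Next, the hypothesis that $I_1 \cap I_2$ contains no non-zero Hopf ideal is precisely the remaining assumption needed to apply the lemma, so the lemma immediately yields that $\rho$ is inner faithful. Finally, since $A \times B$ is finite-dimensional (being a product of two finite-dimensional algebras), $\rho$ is a finite-dimensional inner faithful representation of $H$, and therefore $H$ is inner linear.

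In fact there is essentially no obstacle here: the corollary is a straightforward specialization of the lemma, the only point being the translation between the "inner linear" terminology (existence of a finite-dimensional inner faithful representation) and the "inner faithful representation" formulation used in the lemma, which has already been made explicit in the text. One small thing worth noting in the write-up is that we may as well take the $\rho_k$ to have finite-dimensional targets, since inner linearity of $H/I_k$ gives exactly such representations; this ensures the target $A \times B$ of $\rho$ is finite-dimensional, which is what the definition of inner linear requires.
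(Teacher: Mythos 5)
Your proof is correct and follows essentially the same route as the paper, which simply applies the lemma to finite-dimensional inner faithful representations of $H/I_1$ and $H/I_2$ (these exist by inner linearity) and notes that the product target is finite-dimensional. Your explicit spelling out of the translation between inner linearity and finite-dimensional inner faithful representations is exactly the intended argument.
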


\begin{proof}
 This is a consequence of the lemma, by using finite-dimensional
 inner faithful representations of $H/I_1$ and $H/I_2$.
\end{proof}

\begin{corollary}
 Let $H$ be a Hopf algebra, let $I_1,I_2$ be Hopf ideals in $H$. Assume
 that the algebra map
 \begin{align*}
  \theta : H &\longrightarrow H/I_1 \otimes H/I_2 \\
  x &\longmapsto \pi_1(x_{(1)}) \otimes \pi_2(x_{(2)})
 \end{align*}
where $\pi_1,\pi_2$ are the canonical projections, is injective, and that the Hopf algebras
 $H/I_1$ and $H/I_2$ are inner linear. Then $H$ is inner linear.
\end{corollary}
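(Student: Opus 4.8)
The plan is to reduce this statement to the preceding corollary, by showing that the injectivity of $\theta$ already forces $I_1 \cap I_2$ to contain no nonzero Hopf ideal. The first thing to record is that $\theta = (\pi_1 \otimes \pi_2) \circ \Delta$ is an algebra map, being the composite of the comultiplication (an algebra map, since $H$ is a Hopf algebra) with the algebra map $\pi_1 \otimes \pi_2$; so the hypothesis that $\theta$ be injective is a statement about an algebra map, as it should be.

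The key step will be the following. Suppose $J$ is a Hopf ideal of $H$ contained in $I_1 \cap I_2$. Since a Hopf ideal is in particular a coideal, $\Delta(J) \subseteq J \otimes H + H \otimes J$. Applying $\pi_1 \otimes \pi_2$ to this inclusion and using that $\pi_1(J) = (0)$, because $J \subseteq I_1 = \Ker(\pi_1)$, and likewise $\pi_2(J) = (0)$, because $J \subseteq I_2 = \Ker(\pi_2)$, we obtain
\[
\theta(J) = (\pi_1 \otimes \pi_2)(\Delta(J)) \subseteq \pi_1(J) \otimes (H/I_2) + (H/I_1) \otimes \pi_2(J) = (0).
\]
As $\theta$ is injective this yields $J = (0)$. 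Thus $I_1 \cap I_2$ contains no nonzero Hopf ideal, and since $H/I_1$ and $H/I_2$ are inner linear by assumption, the previous corollary applies and gives that $H$ is inner linear.

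I do not expect any serious obstacle here: the only point requiring care is the elementary coideal computation above, and it is worth noting that one uses far less than the injectivity of $\theta$, namely only that $\Ker(\theta)$ contains no nonzero Hopf ideal. As a variant, one could instead argue directly from Theorem \ref{reformulation}: choosing finitely generated Hopf subalgebras $L_k \subseteq (H/I_k)^0$ that separate the points of $H/I_k$ and viewing them inside $H^0$ along the transposes of the $\pi_k$, the Hopf subalgebra of $H^0$ that they generate is finitely generated, and it separates the points of $H$, because $L_1 \otimes L_2$ separates the points of $H/I_1 \otimes H/I_2$ and $\theta$ is injective; this route additionally requires the small observation that a tensor product of point-separating subspaces of dual spaces separates points of the tensor product.
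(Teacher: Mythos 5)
Your proof is correct and follows essentially the same route as the paper: show that any Hopf ideal $J \subseteq I_1 \cap I_2$ satisfies $\theta(J)=(0)$ via the coideal property, conclude $J=(0)$ by injectivity, and then invoke the preceding corollary.
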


\begin{proof}
 Let $J \subset I_1 \cap I_2$ be a Hopf ideal.
 Then $$\Delta(J) \subset J \otimes H + H \otimes J \subset
 (I_1 \cap I_2) \otimes H + H \otimes (I_1 \cap I_2).$$
 Hence $\theta(J)=(0)$ and $J=(0)$ by the injectivity of $\theta$, and we are done by the previous result.
\end{proof}

This last result might be used to show the inner linearity
of Hopf algebras having an analogue of the dense big cell of reductive
algebraic groups, such as in \cite{pw}.
Indeed for $\mathcal O_q(\GL_n(\C))$, the previous result
combined with Theorem 8.1.1 in \cite{pw}
reduces the problem to show the inner linearity of
$\mathcal O_q(\GL_n( \C))$ to show the inner linearity of the
pointed Hopf algebras $\mathcal O_q(B)$ and $\mathcal O_q(B')$, for which the method
of Theorem 4.1 in \cite{bb} is available.

\section{Stability of inner linearity under extensions}

We now study the question of the stability of inner linearity under extensions.
At the group level, it is known that linearity is not stable under extensions (see e.g. \cite{fp}),
but we have the following positive result: If $G$ is a group having a linear normal subgroup $H$
of finite index, then $G$ is linear.
We prove a Hopf algebraic analogue
of a weak form of this result, and we apply it to two types of Hopf algebras.

\subsection{The general result}
Here is our more general result on the preservation of inner linearity by extensions.

\begin{theorem}\label{extension}
 Let $H$ be a Hopf algebra and let $A\subset H$ be a normal Hopf subalgebra.
 Assume that the following conditions hold:
 \begin{enumerate}
  \item
 $A$ is inner linear and commutative,
 \item  $H$ is finitely generated as a right $A$-module.
  \end{enumerate}
 Then $H$ is inner linear.
\end{theorem}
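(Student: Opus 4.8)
The strategy is to use Theorem~\ref{reformulation}: it suffices to produce a finitely generated Hopf subalgebra of $H^0$ that separates the points of $H$. The starting data are, on the one hand, an inner faithful finite-dimensional representation $\rho : A \longrightarrow \End(W)$ (equivalently, a finitely generated Hopf subalgebra $L_A \subset A^0$ separating the points of $A$), and on the other hand a finite set $h_1, \dots, h_m \in H$ generating $H$ as a right $A$-module. The first step is to extend $\rho$ to $H$. Since $A$ is normal in $H$, the standard induction/restriction formalism for Hopf algebras applies: form the induced module $\Ind_A^H W = H \otimes_A W$, which is finite-dimensional because $H$ is finitely generated as a right $A$-module (it is spanned by the $h_i \otimes w$). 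Call $\pi : H \longrightarrow \End(\Ind_A^H W)$ the associated representation. By Frobenius reciprocity the restriction of $\pi$ to $A$ contains $\rho$ as a subrepresentation (using normality of $A$, the various $A$-conjugates of $W$ appear), so in particular $\Ker(\pi) \cap A$ contains no nonzero Hopf ideal of $A$.

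The second step is to boost $\pi$ to something inner faithful on all of $H$. The point is that $\pi$ alone need not be inner faithful, but it is ``inner faithful along $A$'', and $A$ is large inside $H$ in the sense that $H$ is a finite module over it. One feeds $\pi$ into the tensor-power machinery $\pi^g$ of the proof of Theorem~\ref{reformulation} and considers $L \subset H^0$, the Hopf subalgebra generated by the coefficients of $\pi$; equivalently, one considers $I_\pi = \cap_{g \in F}\Ker(\pi^g)$, the largest Hopf ideal inside $\Ker(\pi)$. I want to show $I_\pi = (0)$, i.e.\ that $L$ separates the points of $H$. By construction $I_\pi$ is a Hopf ideal of $H$, hence $I_\pi \cap A$ is a Hopf ideal of $A$; but $I_\pi \cap A \subset \Ker(\pi)\cap A$ which contains no nonzero Hopf ideal of $A$, so $I_\pi \cap A = (0)$. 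Thus $I_\pi$ is a Hopf ideal of $H$ meeting $A$ trivially, and it remains to argue that $A$ being cofinite (in the module sense) and commutative forces such a Hopf ideal to vanish. Concretely: $H/I_\pi$ is still finitely generated as a module over the image $\bar A$ of $A$, which injects into $H/I_\pi$; and the quotient map $H \to H/I_\pi$ separates points as soon as $L$ does. The mechanism here is that a Hopf ideal $I_\pi$ with $I_\pi \cap A = (0)$ must be contained in some ``kernel of an induced representation'' argument, or more directly: $A^+H$ (the ideal generated by $\ker\varepsilon|_A$) has finite codimension in $H$ since $H/A^+H$ is a quotient of the finite module $H$, and one plays $I_\pi$ against the finitely many $A$-conjugacy data of $W$.

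The cleanest way to finish, and the step I expect to be the real obstacle, is therefore to make precise the following: if $I$ is a Hopf ideal of $H$ with $I \cap A = (0)$, and $\rho$ is chosen inner faithful on $A$, then one can arrange from the outset that $\pi$ is inner faithful. The honest route is to invoke the normality of $A$ to get that the Hopf ideals of $H$ contained in $\Ker(\pi)$ are controlled by their intersection with $A$ together with finitely many ``twisted'' copies $W^{(i)}$ obtained by conjugating $W$ by the finitely many module generators $h_i$ (this is where commutativity of $A$ is genuinely used — it guarantees the conjugates of the commutative Hopf algebra $L_A$ again generate a finitely generated commutative object, and that restriction of $H^0$-comodules to $A$ behaves well). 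Including all these finitely many conjugate summands in a single finite-dimensional representation $\widetilde\pi$ of $H$ makes $\Ker(\widetilde\pi)$ contain no nonzero Hopf ideal: any such ideal $J$ would satisfy $J \cap A = (0)$ and would be stable under the relevant conjugations, forcing $J \subset A$, hence $J = (0)$. Then $\widetilde\pi$ is the desired finite-dimensional inner faithful representation of $H$, and $H$ is inner linear. The delicate points to get right are (a) the precise Frobenius reciprocity / Mackey-type statement for the normal commutative Hopf subalgebra $A \subset H$, and (b) verifying that a $J$-stable-under-conjugation Hopf ideal with $J \cap A = (0)$ indeed lands in $A$; these are where I would spend the bulk of the effort.
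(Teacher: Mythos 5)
Your first half runs parallel to the paper: you induce the inner faithful representation $\rho$ of $A$ up to $H$, observe that $H\otimes_A W$ is finite dimensional because $H$ is a finite right $A$-module, and deduce (via injectivity of $W\to H\otimes_A W$, i.e.\ faithful flatness, rather than the Frobenius-reciprocity detour you invoke) that any Hopf ideal $J$ in the kernel satisfies $J\cap A=(0)$. The gap is in the finish. Knowing only $J\cap A=(0)$ is not enough: a nonzero Hopf ideal of $H$ can meet $A$ trivially (take $H=A\otimes B$ with $B$ a finite-dimensional Hopf algebra and $J=A\otimes B^+$, the kernel of the projection $H\to A$), so some further constraint on $J$ must be produced. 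Your proposed mechanism --- that stability of $J$ under conjugation by the module generators together with $J\cap A=(0)$ ``forces $J\subset A$, hence $J=(0)$'' --- is not an argument: if $J\subset A$ and $J\cap A=(0)$ then trivially $J=(0)$, so the claim assumes exactly what has to be proved, and no reason is given why a conjugation-stable Hopf ideal avoiding $A$ should sit inside $A$. The role you assign to commutativity of $A$ (good behaviour of the conjugates of $L_A$ in $H^0$) is also not where commutativity is actually needed.

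The two ingredients you are missing are the following. First, one enlarges the representation: since $H$ is a finite $A$-module, the quotient Hopf algebra $H//A=H/A^+H$ is finite dimensional, and one takes $\theta=(p,\tilde\rho):H\to H//A\times \End(H\otimes_A W)$; the extra factor guarantees that any Hopf ideal $J\subset\Ker(\theta)$ satisfies not only $J\cap A=(0)$ but also $J\subset A^+H$. Second, one needs the genuinely nontrivial lemma that a Hopf ideal with $J\cap A=(0)$ and $J\subset A^+H$ must vanish. This is where commutativity of $A$ enters: it gives faithful flatness of $H$ over $A$ \cite{ag}, hence exactness of $k\to A\to H\to H//A\to k$ and of the corresponding sequence for $H/J$, and one concludes by a Hopf-algebraic five-lemma resting on Schneider's theorem on Hopf--Galois extensions \cite{schn} that $H\to H/J$ is an isomorphism. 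Without the $H//A$ factor and this lemma, your construction only yields what the paper records as an open question (whether the induced representation $\tilde\rho$ alone is inner faithful), not a proof of the theorem.
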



To prove the theorem, we need to recall some facts
on exact sequences of Hopf algebras.

First recall that
a Hopf subalgebra $A$ of a Hopf algebra $H$ is said to be \emph{normal} if it is
stable under both left and right adjoint actions of $H$ on itself,
defined, respectively, by
$$\ad_l(x)(y) = x_{(1)}y\mathcal S(x_{(2)}), \quad \ad_r(x)(y) = \mathcal S(x_{(1)})y
x_{(2)},$$ for all $x,y \in H$. If $A \subseteq H$ is a normal Hopf
subalgebra, then the ideal $HA^+=A^+H$ is a Hopf ideal and the
canonical map $H \to \overline H : = H/HA^+:=H//A$ is a Hopf algebra
map.

Now recall that a sequence
of Hopf algebra maps
\begin{equation}\label{seq}k \to A \overset{i}\to H \overset{p}\to \overline H \to
k,\end{equation} is called \emph{exact} if the following
conditions hold:
\begin{enumerate}\item $i$ is injective and $p$ is surjective,
\item $p \circ i$ = $\epsilon 1$,
\item ${\rm Ker} p =HA^+$, \item $A = H^{\co p} = \{ h \in H:\, (\id \otimes p)\Delta(h) = h \otimes 1
\}$. \end{enumerate}
It follows that $i(A)$ is normal Hopf subalgebra of $H$.
Conversely, if
we have a sequence \eqref{seq} and  $H$
is faithfully flat over $A$, then (1), (2) and (3) imply (4). See e.g. \cite{ad}.

\medskip

We now state several preparatory lemmas for the proof of Theorem \ref{extension}.

\begin{lemma}\label{exactcommutative}
 Let $H$ be a Hopf algebra and let $A\subset H$ be a normal Hopf subalgebra.
 If $A$ is commutative, then $k \to A \to H \to H//A \to k$ is an exact sequence.
\end{lemma}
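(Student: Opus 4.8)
The plan is to verify conditions (1), (2) and (3) of the definition of an exact sequence for $k \to A \xrightarrow{i} H \xrightarrow{p} H//A \to k$, where $i$ is the inclusion and $p$ the canonical projection, and then to invoke the faithful flatness of $H$ over $A$ to obtain condition (4) automatically. Conditions (1), (2), (3) are essentially immediate from the construction of $H//A = H/HA^+$: the projection $p$ is surjective by definition, $p \circ i = \varepsilon 1$ because $A^+ \subseteq HA^+ = \operatorname{Ker} p$, and $\operatorname{Ker} p = HA^+$ is the defining property of the quotient. So the substantive content is reduced to a single point.

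The key step, and the main obstacle, is therefore to show that $H$ is faithfully flat as a (say left or right) $A$-module when $A$ is commutative and normal. This is exactly where commutativity is used: the relevant theorem is that a Hopf algebra is faithfully flat over any commutative normal Hopf subalgebra. I would cite this — it is available in the literature on exact sequences of Hopf algebras, e.g. in the references already invoked in the excerpt (\cite{ad}), and ultimately rests on results of Takeuchi, Masuoka, and Schneider on faithful flatness over (co)commutative or central Hopf subalgebras; the commutative case is the most classical. Concretely, a commutative Hopf algebra $A$ is the coordinate ring of an affine group scheme, and $H$ becomes a comodule algebra structure making the extension $A \subseteq H$ a faithfully flat Hopf--Galois-type object; alternatively one can argue that any Hopf algebra is faithfully flat over a commutative normal Hopf subalgebra via the structure theory of such extensions.

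Having faithful flatness, the implication "(1), (2), (3) $\Rightarrow$ (4)" recalled in the excerpt (just before the statement, with reference to \cite{ad}) finishes the argument, since it gives $A = H^{\operatorname{co} p}$, which is the last condition defining exactness. I would write the proof in three short sentences: one identifying the sequence and noting (1)--(3) are clear from the definition of $H//A$; one invoking faithful flatness of $H$ over the commutative normal Hopf subalgebra $A$; and one concluding exactness by the cited converse. The only genuine work is locating and correctly attributing the faithful flatness statement, so I expect the final proof to be just a few lines, essentially a bookkeeping argument once that black box is in place.
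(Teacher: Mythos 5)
Your proposal is correct and matches the paper's argument: the paper also notes that conditions (1)--(3) are automatic and reduces everything to faithful flatness of $H$ over the commutative normal Hopf subalgebra $A$, which it quotes from the literature (Proposition 3.12 of the Arkhipov--Gaitsgory reference) before invoking the recalled implication that (1)--(3) plus faithful flatness give (4). The only difference is the precise citation for the faithful flatness black box, which does not affect the structure of the proof.
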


\begin{proof}
 We know from \cite{ag}, Proposition 3.12, that $H$ is a faithfully flat
 as an $A$-module, and hence we have the announced exact sequence
 by the previous considerations.
\end{proof}

We shall need the following Hopf algebraic version of the 5 lemma.

\begin{lemma}\label{5lemma}
 Consider a commutative diagram of Hopf algebras
 \begin{equation}\begin{CD}
  k @>>> A @>>> H @>>> \overline{H}@>>> k \\
  @. @| @VV{\theta}V  @| @. \\
    k @>>> A @>>> H' @>>> \overline{H} @>>> k
 \end{CD}\end{equation}
 where the rows are exact. If $A$ is commutative, then $\theta$ is an isomorphism.
\end{lemma}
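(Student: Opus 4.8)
The plan is to exploit faithful flatness (valid here since $A$ is commutative, by Lemma \ref{exactcommutative} applied to the normal Hopf subalgebra $A$ inside both $H$ and $H'$) together with a diagram chase, in the style of the classical five lemma but taking care that we are working with algebra maps rather than maps of abelian groups. First I would record that $\theta$ is automatically compatible with the $A$-module structures on both sides (left $A$ acts by multiplication via the inclusions $A\hookrightarrow H$ and $A\hookrightarrow H'$, and the left square commutes), and that it induces the identity after quotienting by $A^+$, i.e. modulo $HA^+$ and $H'A^+$; this is the content of the right square. So $\theta$ descends to the identity $\overline H\to\overline H$.

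Next I would prove injectivity of $\theta$. If $h\in H$ with $\theta(h)=0$, then its image in $\overline H$ is zero, so $h\in HA^+=\Ker(p)$. Using that $H$ is faithfully flat over $A$, hence the Hopf–Galois / descent picture applies, one has $HA^+ = H\cdot A^+$ and more precisely $\Ker(p)$ is generated as a left (equivalently right) $A$-module in the expected way; concretely, writing $h = \sum x_i a_i^+$ with $a_i^+\in A^+$, applying $\theta$ and using $A$-linearity gives $\sum \theta(x_i) a_i^+ = 0$ in $H'$. Iterating this argument down the coradical/$A^+$-adic filtration $H \supset HA^+ \supset H(A^+)^2 \supset \cdots$, which is separated because $\bigcap_n H(A^+)^n = 0$ by faithful flatness (this is the standard fact that $H$ is a flat, hence ``Krull-intersection-like'', $A$-module, using that $A$ is a commutative Hopf algebra so $A^+$ behaves like an augmentation ideal), one concludes $h=0$. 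The main obstacle is precisely this step: one must be careful that the naive five-lemma chase, which in the abelian setting would be two lines, really does go through here, and the cleanest way is to invoke that an exact sequence $k\to A\to H\to\overline H\to k$ with $H$ faithfully flat over $A$ is cleft-like enough that $H \cong A \otimes \overline H$ as a left $A$-module (indeed as left $A$-module and right $\overline H$-comodule, $H$ is ``$A$-relatively injective'' and the associated graded for the $A^+$-adic filtration is $A\otimes\overline H$); then $\theta$ is an $A$-linear map of such objects inducing the identity on $\overline H$, and a filtered/graded argument forces it to be an isomorphism.

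Finally, surjectivity follows from the same structural description: $H' \cong A\otimes\overline H$ as left $A$-modules with the induced filtration, $\theta$ is filtered and $A$-linear, it is the identity on the bottom quotient $\overline H$ and (by the injectivity just proved plus a dimension/rank count in each filtration layer, or directly by the snake lemma applied to the filtered pieces) it is surjective on each associated graded piece $A\otimes\overline H$, hence surjective on $H'$ since the filtration on $H'$ is exhaustive. Putting the two halves together, $\theta$ is a bijective Hopf algebra map, hence an isomorphism of Hopf algebras. I expect the bookkeeping with the faithful flatness — converting ``$H$ faithfully flat over commutative $A$'' into the concrete statement $H\cong A\otimes(H//A)$ as filtered left $A$-modules — to be the only nontrivial point; everything else is a routine transcription of the ordinary five lemma.
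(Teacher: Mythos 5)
There is a genuine gap in your injectivity/surjectivity argument: it rests on two structural claims that faithful flatness does not provide. First, you assume the $A^+$-adic filtration $H \supset HA^+ \supset H(A^+)^2 \supset \cdots$ is separated, i.e. $\bigcap_n H(A^+)^n = 0$. This is false in general: for a commutative Hopf algebra $A$ the augmentation ideal may be idempotent (e.g. $A = \C[\Z_2] \cong \C \times \C$, where $(A^+)^2 = A^+$), so the filtration can be constant and its intersection equal to $HA^+ \neq 0$; no Krull-intersection statement holds without Noetherian/connectedness hypotheses, which the lemma does not assume (and which would anyway exclude cases the paper needs, since the lemma is applied with arbitrary commutative normal $A$). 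Second, you invoke that the extension is ``cleft-like enough'' that $H \cong A \otimes \overline{H}$ as (filtered) left $A$-modules with associated graded $A \otimes \overline{H}$. That is the normal basis/cleftness property, which is strictly stronger than faithful flatness and fails in general for faithfully flat $\overline{H}$-Galois extensions; nothing in the hypotheses yields it. Note also that your first step of the injectivity chase, rewriting $\theta(h) = \sum \theta(x_i)a_i^+ = 0$, only restates that $\theta(h) \in H'A^+$, which was already known from the right-hand square, so the induction never gets started.

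What is correct in your proposal is the setup: $\theta$ is an $A$-linear, right $\overline{H}$-colinear algebra map inducing the identity on $A$ and on $\overline{H}$, and faithful flatness of $H$ and $H'$ over $A$ holds because $A$ is commutative (Lemma \ref{exactcommutative}, via \cite[Proposition 3.12]{ag}). The paper then finishes in one stroke using Hopf--Galois theory rather than a filtration: exactness of the rows makes $A \subset H$ and $A \subset H'$ faithfully flat $\overline{H}$-Galois extensions, and an $A$-linear, $\overline{H}$-colinear algebra map between such extensions is automatically an isomorphism by Schneider's rigidity result \cite[Remark 3.11]{schn} (see also \cite[Corollary 1.15]{ag1}). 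So your instinct that faithful flatness is the engine is right, but the working mechanism is faithfully flat descent along $A \to H'$ in the Galois setting, not an $A^+$-adic or graded argument; to repair your proof you would have to either invoke that theorem or reprove it, since the elementary five-lemma chase does not go through.
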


\begin{proof}
 The proof is similar to Corollary 1.15 in \cite{ag1}: $A \subset H$ and
 $A \subset H'$ are $\overline{H}$-Galois extensions and since $H'$ is a faithfully flat $A$-module
 \cite[Proposition 3.12]{ag}, the $\overline{H}$-colinear $A$-linear algebra map $\theta$ is an isomorphism
 by Remark 3.11 in \cite{schn}.
\end{proof}

\begin{lemma}
 Let $A \subset H$ be a normal and commutative Hopf subalgebra.
 Let $J$ be a Hopf ideal in $H$ such that $J \cap A=(0)$ and $J \subset A^+H$. Then $J=(0)$.
\end{lemma}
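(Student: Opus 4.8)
The plan is to prove that the canonical surjection $\theta \colon H \to H/J$ is an isomorphism; since $J = \Ker\theta$, this gives $J = (0)$. To do so I want to realize $\theta$ as the middle map in a morphism of exact sequences of Hopf algebras whose outer terms are $A$ and $\overline{H} := H//A$, and then invoke the Hopf-algebraic five lemma (Lemma \ref{5lemma}), which applies precisely because $A$ is commutative.

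First, by Lemma \ref{exactcommutative} the sequence $k \to A \overset{i}\to H \overset{p}\to \overline{H} \to k$ is exact. For the bottom row I would set $\bar A := \theta(A) \subseteq H/J$. Since $J \cap A = (0)$, the restriction $\theta|_A$ is injective, so $\bar A$ is a Hopf subalgebra of $H/J$ isomorphic to $A$; it is commutative, and it is normal in $H/J$ because the left and right adjoint actions of $H/J$ on $\bar A$ are the $\theta$-images of the corresponding adjoint actions of $H$ on $A$, which preserve $A$ by normality. Applying Lemma \ref{exactcommutative} once more, the sequence $k \to \bar A \to H/J \overset{\bar p}\to (H/J)//\bar A \to k$ is exact.

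It remains to identify the cokernel terms and check the diagram commutes. We have $(H/J)//\bar A = (H/J)\big/\big[(H/J)\bar A^+\big]$, and $(H/J)\bar A^+ = \theta(HA^+)$; since $J \subseteq A^+H = HA^+$ by hypothesis, $\theta(HA^+) = HA^+/J$, hence $(H/J)//\bar A \cong H/HA^+ = \overline{H}$, compatibly with $p$ and $\bar p$, i.e. $\bar p \circ \theta = p$. Under the identifications $\bar A \cong A$ and $(H/J)//\bar A \cong \overline{H}$, the map $\theta$ therefore fits into a commutative diagram of exact sequences of Hopf algebras with identity maps on the two outer columns, so Lemma \ref{5lemma} yields that $\theta$ is an isomorphism, whence $J = (0)$. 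The main (indeed essentially the only) nontrivial ingredient is the exactness of the bottom row — concretely the coinvariance condition $\bar A = (H/J)^{\co \bar p}$ — which is exactly where commutativity of $A$ enters, through the faithful flatness behind Lemma \ref{exactcommutative}; the rest is bookkeeping.
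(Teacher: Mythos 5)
Your proof is correct and follows essentially the same route as the paper's: both build a morphism of exact sequences $k \to A \to H \to H//A \to k$ and $k \to A \to H/J \to (H/J)//A \to k$ (using $J\cap A=(0)$ for the left-hand identification and $J\subset A^+H$ to identify the quotients, which you do via the third isomorphism theorem and the paper does by checking the induced map $H//A \to (H/J)//A$ has zero kernel) and then apply the Hopf-algebraic five lemma (Lemma \ref{5lemma}). The difference is purely presentational.
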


\begin{proof}
 We have, by Lemma \ref{exactcommutative}, an exact sequence
 \begin{equation*}k \to A \overset{i}\to H \overset{p_H}\to H//A \to
k.\end{equation*}
Now put $K = H/J$, and let $q : H \to K$ be the canonical surjection.
 Since $J\cap A= (0)$, we have an injective
Hopf algebra map $j : A \to K$ such that $q \circ i= j$, and $j(A)$ is a normal Hopf subalgebra
of $K$. We then have an exact sequence
\begin{equation*}k \to A \overset{j}\to K \overset{p_K}\to K//A \to
k.\end{equation*}
We now claim that there exists a Hopf algebra isomorphism $\overline{q} : H \to K$
such that the following diagram is commutative
\begin{equation*}\begin{CD}
  k @>>> A @>>> H @>{p_H}>> H//A @>>> k \\
  @. @| @VV{q}V  @VV{\overline{q}}V @. \\
    k @>>> A @>>> K @>{p_K}>> K//A @>>> k
 \end{CD}\end{equation*}
 Indeed, we have $p_K \circ q(A^+H)= p_K(j(A)^+H)=0$, which shows the existence
 of $\overline{q}$. We also have, since $J \subset A^+H$,
 \begin{align*}
 {\rm Ker}(\overline{q}) &= p_H({\rm Ker}(p_K\circ q)) = p_H(q^{-1}({\rm Ker}(p_K)) \\
 &=p_H(q^{-1}(j(A)^+K))= p_H(J+A^+H) \subset p_H(A^+H) =(0)
 \end{align*}
 and hence $\overline{q}$ is an ismorphism.
 We get  a commutative diagram with exact rows
 \begin{equation*}\begin{CD}
  k @>>> A @>>> H @>{\overline{q} \circ p_H}>> K//A @>>> k \\
  @. @| @VV{q}V  @| @. \\
    k @>>> A @>>> K @>>> K//A @>>> k
 \end{CD}\end{equation*}
(the top row is still exact because $\overline{q}$ is an isomorphism) and Lemma \ref{5lemma} ensures that $q$ is an isomorphism, hence $J=(0)$.
\end{proof}

\begin{proof}[Proof of Theorem \ref{extension}]
Let $\rho : A \to \End(V)$ be an inner faithful representation, with $V$ finite-dimensional.
As usual, we get the induced representation
$\tilde{\rho} : H \to \End(H\otimes_AV)$. Since $H$ is finitely generated as an $A$-module,
the vector space $H\otimes_AV$ and the Hopf algebra
$H//A$ are finite dimensional.  We thus consider the finite-dimensional representation
\begin{align*}
 \theta : H &\longrightarrow H//A \times \End(H\otimes_AV) \\
 x &\longmapsto (p_H(x), \tilde{\rho}(x))
\end{align*}
Let us show that $\theta$ is inner faithful. Let $J \subset {\rm Ker}(\theta)= A^+H \cap {\rm Ker}(\tilde{\rho})$
be a Hopf ideal. Then $J\cap A \subset A$ is a Hopf ideal. It is easy
to see, using the faithful flatness of $H$ as a right $A$-module,
that ${\rm Ker}(\tilde{\rho}) \cap A \subset {\rm Ker}(\rho)$. Thus
  $J\cap A \subset {\rm Ker}(\rho)$ and $J \cap A=(0)$ since $\rho$ is inner faithful.
  We thus have $J \subset A^+H$ and $J \cap A=(0)$: the previous lemma ensures that
  $J=(0)$. Hence $\theta$ is inner faithful and $H$ is inner linear.
\end{proof}

\begin{question}
 Is true that the induced representation $\tilde{\rho} : H \to \End(H\otimes_AV)$
 is inner faithul if $\rho$ is? A positive answer would give a strenghtening
 of Theorem \ref{extension}, dropping the commutativity assumption on $A$.
\end{question}

 \subsection{Applications}
 Our first application is with quantized function algebras at roots of unity.

 \begin{theorem}
 Let $G$ be a connected, simply connected complex semisimple algebraic
 group with Lie algebra $\mathfrak g$, and let $q$ be a root of unity of odd order $\ell$, with $\ell$ prime to $3$ if $G$
 contains a component of type $G_2$. The Hopf algebras $\mathcal O_q(G)$ and $U_q(\mathfrak g)$
 are inner linear.
\end{theorem}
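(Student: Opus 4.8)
The plan is to deduce this from Theorem \ref{extension} by exhibiting inside $\mathcal O_q(G)$ (resp. $U_q(\mathfrak g)$) a normal, commutative, inner linear Hopf subalgebra over which the whole algebra is a finitely generated module. For $\mathcal O_q(G)$ at a root of unity $q$ of odd order $\ell$ (prime to $3$ in the $G_2$ case), the relevant structural input is the classical result of De Concini--Lyubashenko (see also \cite{bg}): the Frobenius map realizes $\mathcal O(G)$, the coordinate algebra of the ordinary algebraic group $G$, as a central Hopf subalgebra of $\mathcal O_q(G)$, and $\mathcal O_q(G)$ is a finitely generated (projective) module over $\mathcal O(G)$ of rank $\ell^{\dim G}$. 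First I would check that $A := \mathcal O(G) \subset \mathcal O_q(G)$ is normal: centrality already forces $\ad_l$ and $\ad_r$ to stabilize $A$, so normality is automatic. Next, $A = \mathcal O(G)$ is commutative by construction, and it is inner linear because $G$ is a linear algebraic group — concretely, a closed embedding $G \hookrightarrow \GL_N(\C)$ gives a surjection from $\mathcal O(\GL_N)$, but more to the point $\mathcal O(G)$ is a finitely generated commutative Hopf algebra whose $0$-dual $\mathcal O(G)^0 = \C[G]$ (the group algebra, in the appropriate sense) — actually the cleanest argument is that a commutative finitely generated Hopf algebra is the coordinate ring of an affine algebraic group, which is linear, and linearity of the group is exactly inner linearity of $\mathcal O(G)$ via the faithful finite-dimensional comodule furnishing the embedding into $\GL_N$; one then verifies inner linearity directly from the definition, or cites \cite{bb}. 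With conditions (1) and (2) of Theorem \ref{extension} in hand, that theorem immediately gives that $\mathcal O_q(G)$ is inner linear.

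For $U_q(\mathfrak g)$ at the same root of unity, the argument is parallel using the quantum Frobenius on the other side: by Lusztig's construction there is a Hopf algebra surjection $\mathrm{Fr} : U_q(\mathfrak g) \twoheadrightarrow U(\mathfrak g)$ (or $\mathcal U(G)$ in the appropriate form), and $U_q(\mathfrak g)$ contains a large central subalgebra — the De Concini--Kac--Procesi center $Z_0$ — over which it is a finite module, with $Z_0$ the coordinate ring of a variety closely related to $G$ (a covering of the big cell, essentially $\mathcal O$ of the dual Poisson group). However, here one must be careful: the natural finite-over-center statement for $U_q(\mathfrak g)$ typically concerns a \emph{restricted} or \emph{De Concini--Kac} form, and $U_q(\mathfrak g)$ as a whole need not be module-finite over a commutative normal Hopf subalgebra in the naive sense. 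The robust route I would take instead is to use Theorem \ref{reformulation} together with the roots-of-unity analogue of the separation statement: one shows that $U_q(\mathfrak g)^0$ contains a finitely generated Hopf subalgebra separating points — and a candidate is the Hopf subalgebra generated by the coefficients of the finite-dimensional type-I representations, which at a root of unity is $\mathcal O_q(G)$ again (by \cite{bg,jan2}). If the type-I (and, if needed, the one-dimensional) representations of $U_q(\mathfrak g)$ separate points — which is the relevant representation-theoretic fact, analogous to \cite[Lemma 8.3]{jole} in the generic case — then Theorem \ref{reformulation} gives inner linearity of $U_q(\mathfrak g)$, and the inner linearity of $\mathcal O_q(G)$ either drops out of Corollary \ref{corosep}(2) or is obtained first by the Frobenius argument above.

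So concretely the steps are: (i) invoke the quantum Frobenius to produce the central Hopf subalgebra $\mathcal O(G) \subset \mathcal O_q(G)$ and the module-finiteness; (ii) observe normality is free from centrality, commutativity is clear, and inner linearity of $\mathcal O(G)$ follows since $G$ is a linear algebraic group (a finitely generated commutative Hopf algebra, so covered by \cite{bb}); (iii) apply Theorem \ref{extension} to conclude $\mathcal O_q(G)$ is inner linear; (iv) for $U_q(\mathfrak g)$, either mirror (i)--(iii) with the De Concini--Kac--Procesi central subalgebra if it is genuinely normal and the module-finiteness holds in the needed form, or else establish that the type-I representations separate the points of $U_q(\mathfrak g)$ and apply Theorem \ref{reformulation}; the resulting point-separating finitely generated Hopf subalgebra of $U_q(\mathfrak g)^0$ is $\mathcal O_q(G)$, so Corollary \ref{corosep}(2) also yields inner linearity of $\mathcal O_q(G)$.

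The main obstacle I anticipate is pinning down the correct module-finiteness and normality statement for $U_q(\mathfrak g)$ at a root of unity: the literature has several variants of $U_q$ (adjoint vs. simply connected form, with or without divided powers, De Concini--Kac form vs. Lusztig form), and the clean "finite over a commutative normal Hopf subalgebra" statement does not hold verbatim for the form one usually calls $U_q(\mathfrak g)$. Navigating this — deciding whether to route the $U_q$ case through Theorem \ref{extension} on a suitable subquotient or to route it through Theorem \ref{reformulation} via point separation — is where the real care is needed; the $\mathcal O_q(G)$ half, by contrast, is a direct and unproblematic application of Theorem \ref{extension}.
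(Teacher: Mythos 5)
Your treatment of $\mathcal O_q(G)$ is exactly the paper's: quantum Frobenius gives the central (hence normal) commutative Hopf subalgebra $\mathcal O(G)\subset\mathcal O_q(G)$ with $\mathcal O_q(G)$ finitely generated (projective) over it \cite{dcl}, $\mathcal O(G)$ is inner linear by \cite{bb}, and Theorem \ref{extension} concludes. For $U_q(\mathfrak g)$ the paper does precisely the first of your two options, the one you hesitated over: it cites \cite{bg} for the existence of an affine \emph{central} Hopf subalgebra $Z_0$ (the De Concini--Kac--Procesi subalgebra generated by the $\ell$-th powers $E_i^\ell, F_i^\ell, K_i^{\pm\ell}$) over which $U_q(\mathfrak g)$ is a finitely generated (in fact free) module, and applies Theorem \ref{extension} again. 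Your worry about the form of $U_q$ is resolved by the convention implicit in the statement: $U_q(\mathfrak g)$ here is the unrestricted (De Concini--Kac) form as in \cite{bg}, for which $Z_0$ really is a Hopf subalgebra --- this is where the hypotheses that $\ell$ is odd (and prime to $3$ in type $G_2$) are used, since they make the relevant $q$-binomial coefficients vanish so that $\Delta(E_i^\ell)=E_i^\ell\otimes 1+K_i^\ell\otimes E_i^\ell$, etc. --- and centrality gives normality for free, just as in your step (ii). By contrast, your fallback route through Theorem \ref{reformulation} and Corollary \ref{corosep} is the shakier half of your proposal: the separation of points by type-I representations and the identification of their coefficient Hopf algebra with $\mathcal O_q(G)$ are the generic-$q$ facts (\cite{jole}, used in Theorem 2.4 of the paper) and do not transfer verbatim to roots of unity, so you should not lean on it; the $Z_0$ argument is both available and sufficient, and it is what the paper uses.
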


\begin{proof}
 It is known \cite{dcl} that $\mathcal O_q(G)$ contains a central Hopf subalgebra
 isomorphic to $\mathcal O(G)$, and that $\mathcal O_q(G)$
 is finitely generated and projective as $\mathcal O(G)$-module.
 The Hopf algebra $\mathcal O(G)$ is inner linear \cite{bb}, and thus
 Theorem \ref{extension} gives the result.

 Similarly it is known (see e.g. \cite{bg}) that $U_q(\mathfrak g)$ contains
 an affine central Hopf subalgebra (hence inner linear by \cite{bb})
 $Z_0$  such that $U_q(\mathfrak g)$ is a finitely generated $Z_0$-module.
 Hence again Theorem \ref{extension} gives the result.
\end{proof}

We now turn to the half-liberated orthogonal Hopf algebra.
Recall that the half-liberated orthogonal Hopf algebra
$A_o^*(n)$, introduced in \cite{bs} and further studied in \cite{bv}, is the algebra presented by generators $u_{ij}$, $1 \leq i,j\leq n$,
submitted to the relations
\begin{enumerate}
 \item the matrix $u =(u_{ij})$ is orthogonal,
\item $u_{ij}u_{kl}u_{pq} = u_{pq} u_{kl} u_{ij}$, $1 \leq i,j,k,l,p,q \leq n$.
\end{enumerate}
It admits a Hopf algebra structure given by the standard formulas
$$\Delta(u_{ij}) = \sum_k u_{ik} \otimes u_{kj}, \quad \varepsilon(u_{ij})=\delta_{ij}, \quad S(u_{ij})=u_{ji}$$

\begin{theorem}
 The Hopf algebra $A_o^*(n)$ is inner linear.
\end{theorem}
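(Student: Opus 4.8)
The plan is to realize $A_o^*(n)$ as an extension to which Theorem \ref{extension} applies. The structural input I would use is the known description of $A_o^*(n)$ from \cite{bs,bv}: there is a surjection $A_o^*(n) \to \mathcal O(U_n)$ (or a closely related commutative Hopf algebra of functions on a classical compact/affine group), coming from the fact that the half-commutation relations force the coefficients to behave, after a suitable change of generators, like those of a classical unitary/orthogonal group acting on a complexified space. More precisely, I would first identify inside $A_o^*(n)$ a commutative normal Hopf subalgebra $A$ which is affine — the natural candidate is the image (or a copy) of $\mathcal O(PU_n)$ or $\mathcal O(U_n)$ arising from the projective/unitary version of the half-liberation construction described in \cite{bv} — together with the complementary quotient $A_o^*(n)//A$, which should be finite-dimensional (indeed a group algebra of $\mathbb Z/2$ or a small finite group, reflecting the ``half'' in half-liberation).

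The key steps, in order, would be: (1) recall from \cite{bs,bv} the precise presentation and the fact that $A_o^*(n)$ sits in an exact sequence $k \to A \to A_o^*(n) \to F \to k$ with $A$ commutative affine and $F$ finite-dimensional; in particular $A$ is normal in $A_o^*(n)$. (2) Observe that an affine commutative Hopf algebra $\mathcal O(G)$ of a complex algebraic group is inner linear — this is exactly the case recorded in \cite{bb}, used already in the proof of the $\mathcal O_q(G)$ result above. (3) Check condition (2) of Theorem \ref{extension}, namely that $A_o^*(n)$ is finitely generated as a right $A$-module: this follows because the quotient $F = A_o^*(n)//A$ is finite-dimensional and $A_o^*(n)$ is faithfully flat (in fact free, or at least finitely generated projective) over $A$ by Lemma \ref{exactcommutative} together with the cleftness/freeness statements available for these liberation-type extensions. (4) Apply Theorem \ref{extension} to conclude that $A_o^*(n)$ is inner linear.

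The main obstacle I expect is step (1): extracting from \cite{bs,bv} the correct commutative normal Hopf subalgebra $A$ and verifying that $A_o^*(n)$ is finitely generated as an $A$-module — i.e. that $A_o^*(n)//A$ is genuinely finite-dimensional. The half-commutation relations do not obviously produce a finite-index commutative piece, so one must use the concrete model of $A_o^*(n)$ (its known embedding into a matrix algebra over a commutative Hopf algebra, or the identification of its corepresentation category as built from a classical group via a $\mathbb Z/2$-graded twist). Once that model is in hand, normality of $A$ and the module-finiteness are essentially formal, and commutativity of $A$ is automatic. A secondary, more routine point is to make sure the hypotheses of Lemma \ref{exactcommutative} (normality of the commutative subalgebra) are literally met so that one genuinely has an exact sequence; this is needed to invoke faithful flatness and hence Theorem \ref{extension}. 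I do not anticipate difficulty in steps (2)--(4), which parallel the $\mathcal O_q(G)$ argument verbatim.
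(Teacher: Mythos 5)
Your proposal follows essentially the same route as the paper: take $A \subset A_o^*(n)$ to be the commutative Hopf subalgebra generated by the products $u_{ij}u_{kl}$ (identified in \cite{bv} with $\mathcal O(\mathrm{PGL}_n(\C))$, hence inner linear by \cite{bb}), note it is normal because it is the coinvariant subalgebra of the cocentral Hopf algebra map $u_{ij} \mapsto \delta_{ij}g$ onto $\C[\Z_2]$, and apply Theorem \ref{extension}. The obstacle you flag in step (1) is in fact immediate: commutativity of $A$ follows directly from the half-commutation relations, and $A_o^*(n)$ is generated as an $A$-module by the finitely many elements $u_{ij}$, so module-finiteness is checked directly, with no need for faithful flatness or finite-dimensionality of the quotient.
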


\begin{proof}
 Let $A\subset H=A_o^*(n)$ be the subalgebra generated by the elements
 $u_{ij}u_{kl}$. As remarked in \cite{bv}, it is a commutative Hopf subalgebra of $H$.
 Thus $A \simeq \mathcal O(G)$ for a (reductive) algebraic group $G$
 (in fact it is shown in \cite{bv} that $G= {\rm PGL}_n(\C)$)) and $A$ is inner linear by \cite{bb}.
 It is easy to check the existence of a Hopf algebra map
 $\pi: H \to \C[\Z_2]$, $u_{ij} \mapsto \delta_{ij}g$, where $1 \not= g \in \Z_2$, and
 that $A = H^{\co \pi}$. This Hopf algebra map is cocentral ($\pi(x_{(1)}) \otimes x_{(2)}= \pi(x_{(2)}) \otimes x_{(1)}$, for all $x \in H$) and hence $A$ is normal in $H$
 (see e.g. Lemma 3.4.2 in \cite{mo}).
 Moreover $H$ is, as an $A$-module, generated by the elements $u_{ij}$.
 Thus $H$ is inner linear by Theorem \ref{extension}.
\end{proof}

\section{Inner unitary Hopf $*$-algebras}

In this section we discuss the Hopf algebraic
analogue of the notion of discrete unitary group.
Here we say that a discrete group is unitary if
it can be embedded as a subgroup of the group
of unitary operators on a finite-dimensional Hilbert space.
We work in the framework of Hopf $*$-algebras (see e.g. \cite{ks} for the relevant definitions).

\begin{definition}
 A  Hopf $*$-algebra $H$ is said to be \textbf{inner unitary}
if there exists a  $*$-representa\-tion $\pi : H \longrightarrow A$ into
a finite-dimensional $C^*$-algebra $A$ such that ${\rm Ker}(\pi)$ does not contain
any non zero Hopf $*$-ideal.
\end{definition}

Of course a group $\Gamma$ is unitary if and only if the
Hopf $*$-algebra $\C[\Gamma]$ is inner unitary.

A possible trouble with the previous natural definition
is that it is not clear that an inner unitary Hopf algebra
must be inner linear. We do not know if this is true in general, but
we shall see that under some mild assumptions (see Proposition \ref{iuil}),
an inner unitary Hopf $*$-algebra
is inner linear. This is true in particular for compact Hopf algebras,
i.e. Hopf $*$-algebras arising from compact quantum groups, the
class of Hopf $*$-algebras we are most interested in.

We need the following concept.

\begin{definition}
 We say that a Hopf algebra $H$ has a regular antipode
if there exists a group-like $a \in H$, an algebra morphism
$\Phi : H \longrightarrow \C$ and an integer $m \geq 1$ such that
$$\forall x \in H, \ S^{2m}(x) = a(\Phi * {\rm id}_H * \Phi^{-1}(x))a^{-1}$$
\end{definition}

For example, by \cite{bebuto}, any co-Frobenius Hopf algebra has a Radford type formula
for $S^4$ and hence has a regular antipode. In particular
any cosemisimple Hopf algebra has a regular antipode.

\begin{proposition}
 Let $H$ be a Hopf algebra having a regular antipode.
The following assertions are equivalent.
\begin{enumerate}
 \item $H$ is inner linear.
\item There exists a representation $\pi : H \longrightarrow A$ into  a finite-dimensional algebra $A$ such that ${\rm Ker}(\pi)$ does not contain any
non-zero Hopf ideal $I$ such that $S(I)=I$.
\end{enumerate}
\end{proposition}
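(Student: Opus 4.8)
The implication (1) $\Rightarrow$ (2) is immediate: a Hopf ideal $I$ with $S(I)=I$ is in particular a Hopf ideal, so an inner faithful finite-dimensional representation witnessing inner linearity has a kernel containing no nonzero Hopf ideal at all, a fortiori none stable under $S$. So the content is in (2) $\Rightarrow$ (1), and the plan is to start from a representation $\pi:H\to A$ as in (2) and ``symmetrize'' it so that its kernel contains no nonzero Hopf ideal whatsoever.

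The natural device is to replace $\pi$ by $\pi\oplus (\pi\circ S)$, or more precisely by the pair $(\pi,\pi\circ S^{\op})$ where one views $\pi\circ S$ as a representation of $H$ into $A^{\op}$; since $A^{\op}$ is again finite dimensional this stays finite dimensional. Writing $\rho=\pi\oplus(\pi\circ S^{\op}):H\to A\times A^{\op}$, one has $\Ker\rho=\Ker\pi\cap S^{-1}(\Ker\pi)$. I would then argue that this kernel contains no nonzero Hopf ideal: if $J\subseteq\Ker\rho$ is a Hopf ideal, then $S(J)=J$ automatically (because $S(J)\subseteq J$ for any Hopf ideal, and applying this to $S(J)$, which is again a Hopf ideal since $S$ is a coalgebra anti-morphism... this needs care, $S$ need not be invertible a priori), and $J\subseteq\Ker\pi$, contradicting (2). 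The point where the regular-antipode hypothesis must enter is precisely in controlling the interaction of $S$ with Hopf ideals: one wants that the largest Hopf ideal $I_0$ contained in $\Ker\rho$ satisfies $S(I_0)=I_0$, so that (2) forces $I_0=0$.

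Here is the cleaner route I would actually take, using the regular antipode. Write $S^{2m}(x)=a(\Phi*\id*\Phi^{-1})(x)a^{-1}$ for the given group-like $a$, algebra map $\Phi$, and $m\ge1$. The operator $x\mapsto \Phi*\id*\Phi^{-1}(x)$ is a coalgebra automorphism of $H$ (conjugation by the convolution-invertible character $\Phi$ preserves $\Delta$ and $\eps$), and conjugation by the group-like $a$ is an algebra automorphism that is also a coalgebra automorphism; hence $S^{2m}$ is a coalgebra automorphism of $H$, and it is also an algebra anti-automorphism composed with itself an even number of times, so $S^{2m}$ is a Hopf algebra automorphism. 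Consequently, if $I$ is any Hopf ideal of $H$, then $S^{2m}(I)$ is again a Hopf ideal, and I would like to deduce that the set of Hopf ideals contained in $\Ker\pi$ generates one that is stable under $S$. Concretely: let $I$ be the largest Hopf ideal in $\Ker\pi$ (Proposition 2.2 of \cite{bb} guarantees it exists as $\bigcap_g\Ker(\pi^g)$). I claim it suffices to take $\pi'=\bigoplus_{j=0}^{2m-1}\pi\circ S^{j}$ on $\bigoplus A^{(\pm)}$ — again finite dimensional — whose kernel is $\bigcap_{j=0}^{2m-1}S^{-j}(\Ker\pi)$ (legitimate once one knows $S$ is bijective, which follows from the regular-antipode formula since $S^{2m}$ is bijective). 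The largest Hopf ideal $I'$ inside $\Ker\pi'$ is then stable under $S$: indeed $S(I')\subseteq\Ker\pi'$ because $S(\bigcap_j S^{-j}\Ker\pi)\subseteq\bigcap_j S^{-j+1}\Ker\pi=\bigcap_j S^{-j}\Ker\pi$ using $S^{2m}$-periodicity of the indexing, and $S(I')$ is a Hopf ideal, so $S(I')\subseteq I'$, and equality by applying $S^{2m-1}$ more times. By hypothesis (2), $I'=0$, so $\pi'$ is inner faithful and $H$ is inner linear.

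The step I expect to be the main obstacle is establishing that $S$ is bijective and that $S^{2m}$ is a Hopf algebra automorphism from the regular-antipode formula — in particular checking carefully that conjugation by the character $\Phi$ (the map $x\mapsto \Phi(x_{(1)})x_{(2)}\Phi^{-1}(x_{(3)})$) is indeed a coalgebra map and commutes appropriately with conjugation by the group-like $a$, so that their composite with the a priori only-anti-multiplicative $S^{2m}$ is genuinely a bialgebra automorphism. Once that structural fact is in hand, the manipulation of Hopf ideals under the finite cyclic action of $S$ modulo $S^{2m}$ is routine, and the reduction to (2) is immediate.
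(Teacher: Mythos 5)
Your direction (1) $\Rightarrow$ (2) is fine, but your argument for (2) $\Rightarrow$ (1) has a genuine gap at the key step. You claim that $\Ker\pi'=\bigcap_{j=0}^{2m-1}S^{-j}(\Ker\pi)$ is (essentially) stable under $S$ ``by $S^{2m}$-periodicity of the indexing''; this periodicity requires $S^{2m}(\Ker\pi)=\Ker\pi$, which does not follow from the regular antipode hypothesis. Indeed $S^{2m}$ is the composite of conjugation by the group-like $a$ (which does preserve the two-sided ideal $\Ker\pi$) with the convolution-conjugation $\Phi*\id*\Phi^{-1}$; the latter is an algebra automorphism of $H$, but it sends $\Ker\pi$ to $\Ker\bigl(\pi\circ(\Phi^{-1}*\id*\Phi)\bigr)$, in general a different ideal, since it involves the comultiplication and $\Ker\pi$ is not a coideal. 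The same problem blocks the variant where you work with the largest Hopf ideal $I'$ in $\Ker\pi'$: to get $S(I')\subseteq\Ker\pi'$ you need $S^{2m}(I')\subseteq\Ker\pi$, and expanding $S^{2m}(x)=a\,\Phi(x_{(1)})x_{(2)}\Phi^{-1}(x_{(3)})\,a^{-1}$ on the coideal $I'$ produces the terms $\Phi(I')$ and $\Phi^{-1}(I')$, which you have no way to kill because the characters $\Phi,\Phi^{-1}$ are not part of your representation. (Incidentally, the step you flag as the main obstacle --- bijectivity of $S$ and $S^{2m}$ being a Hopf automorphism --- is the routine part; the $\Phi$-terms are the real issue.)

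The paper's proof repairs exactly this point, and more economically: it does not use the summands $\pi\circ S^{j}$ at all, but enlarges $\pi$ by the two one-dimensional representations $\Phi$ and $\Phi^{-1}$, i.e.\ takes $\pi'=(\pi,\Phi,\Phi^{-1}):H\to A\times\C^2$. Then any Hopf ideal $J\subseteq\Ker\pi'$ satisfies $\Phi(J)=\Phi^{-1}(J)=0$, so writing $S^{-1}$ via the regular antipode formula gives $S^{-1}(J)\subseteq a^{-1}S^{2m-1}(J)a\subseteq J$ (using $S(J)\subseteq J$ and that $J$ is an ideal), whence $S(J)=J$; condition (2) then forces $J=(0)$, so $\pi'$ is inner faithful. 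If you want to keep your scheme, the minimal fix is to add $\Phi$ and $\Phi^{-1}$ as direct summands of your representation --- at which point the $\pi\circ S^{j}$ components become superfluous and you recover the paper's argument.
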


\begin{proof}
By the assumption there exists a group-like $a\in H$, an algebra morphism
 $\Phi : H \longrightarrow \C$ and $m \geq 1$ such that for $x \in H$, one has
$S^{2m}(x)= a(\Phi * {\rm id}_H* \Phi^{-1}(x))a^{-1}$.
Let $\pi : H \longrightarrow A$ be a representation satisfying condition (2), and
consider the representation
\begin{align*}
\pi' : H &\longrightarrow A \times \C^2 \\
x &\longmapsto (\pi(x), \Phi(x), \Phi^{-1}(x))
\end{align*}
Let $J \subset {\rm Ker}(\pi')$ be a Hopf ideal. Then we have
\begin{align*}
S^{-1}(J) & =  a^{-1}\left(\Phi^{-1}* S^{2m-1}*\Phi)(J)\right)a \\ & \subset
a^{-1}\left(\Phi^{-1}(J)S^{2m-1}(H)\Phi(H) + \Phi^{-1}(H)S^{2m-1}(J)\Phi(H)
+\Phi^{-1}(H)S^{2m-1}(H)\Phi(J) \right)a \\ & \subset a^{-1}S^{2m-1}(J)a \subset J
\end{align*}
since $\Phi(J)= \Phi^{-1}(J)=(0)$. Thus $S(J)=J$ with $J \subset {\rm Ker}(\pi)$, and
hence $J=(0)$. Thus $\pi'$ is inner faithful and $H$ is inner linear.
\end{proof}

\begin{proposition}\label{iuil}
 Let $H$ be a Hopf $*$-algebra having a regular antipode.
If $H$ is inner unitary, then $H$ is inner linear. In particular
an inner unitary compact Hopf algebra is inner linear.
\end{proposition}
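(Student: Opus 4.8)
The plan is to deduce Proposition \ref{iuil} from the previous proposition by showing that a $*$-representation witnessing inner unitarity automatically satisfies the weaker condition (2) of that proposition, namely that its kernel contains no non-zero Hopf ideal $I$ with $S(I)=I$. So suppose $\pi : H \longrightarrow A$ is a $*$-representation into a finite-dimensional $C^*$-algebra whose kernel contains no non-zero Hopf $*$-ideal. First I would observe that since $A$ is a $C^*$-algebra, it is in particular a finite-dimensional algebra, so $\pi$ is an honest finite-dimensional representation of $H$ in the sense of the previous proposition; the only extra structure is the $*$-compatibility. The key point is then: if $I \subset \Ker(\pi)$ is a Hopf ideal with $S(I)=I$, I want to upgrade $I$ to a Hopf $*$-ideal still contained in $\Ker(\pi)$, and then invoke inner unitarity to conclude $I=(0)$.

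The natural candidate is $I + I^*$, where $I^* = \{x^* : x \in I\}$. First I would check that $I^*$ is again a Hopf ideal: this uses that $*$ is a conjugate-linear algebra anti-automorphism with $\Delta(x^*) = (x^* \otimes x^*)$-type compatibility (more precisely $\Delta \circ * = (* \otimes *) \circ \Delta$ in the Hopf $*$-algebra axioms, up to the flip convention) and $\varepsilon(x^*) = \overline{\varepsilon(x)}$, together with the standard identity $S(x^*) = S^{-1}(x)^*$ valid in any Hopf $*$-algebra. Hence $I^*$ is a coideal, a two-sided ideal, and $S(I^*) = S^{-1}(I)^* = (S^{-1}(I))^*$; since $S(I)=I$ we also get $S^{-1}(I)=I$, so $S(I^*)=I^*$. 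Therefore $J := I + I^*$ is a Hopf ideal with $J^* = J$ and $S(J)=J$, i.e.\ a Hopf $*$-ideal. Next I would verify $J \subset \Ker(\pi)$: since $\pi$ is a $*$-representation, $\pi(x^*) = \pi(x)^*$, so $\pi(I^*) = \pi(I)^* = (0)$, giving $I^* \subset \Ker(\pi)$ as well. By inner unitarity of $H$, $\Ker(\pi)$ contains no non-zero Hopf $*$-ideal, so $J=(0)$, whence $I=(0)$.

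This shows $\pi$ satisfies condition (2) of the preceding proposition, and since $H$ has a regular antipode, that proposition yields that $H$ is inner linear. Finally, for the last sentence of the statement: a compact Hopf algebra is by definition cosemisimple, and it was already noted in the excerpt (via \cite{bebuto}) that any cosemisimple Hopf algebra has a regular antipode; so the general statement applies and an inner unitary compact Hopf algebra is inner linear.

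I expect the main technical obstacle to be purely bookkeeping: getting the interaction of $*$ with $\Delta$, $\varepsilon$ and $S$ exactly right (in particular the identity $S(x^*) = S^{-1}(x)^*$ and the flip in $\Delta(x^*)$) so that one can cleanly assert that $I^*$ is a Hopf ideal and that $I + I^*$ is a Hopf $*$-ideal. None of this is deep, but it is the only place where the $*$-structure is genuinely used, so it must be stated carefully; everything else is a one-line appeal to the previous proposition.
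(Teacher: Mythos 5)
Your proposal is correct and follows essentially the same route as the paper: reduce to condition (2) of the preceding proposition and show that a Hopf ideal $I\subset\Ker(\pi)$ with $S(I)=I$ forces the Hopf $*$-ideal $I+I^*$ (using $S(x^*)=S^{-1}(x)^*$ and $\pi(I^*)=\pi(I)^*=(0)$) to vanish. The only difference is that you spell out the $*$-compatibility bookkeeping and the cosemisimplicity remark for the compact case, which the paper leaves implicit.
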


\begin{proof}
Let  $\pi : H \longrightarrow A$ be a $*$-representation into
a finite-dimensional $C^*$-algebra $A$ such that ${\rm Ker}(\pi)$ does not contain
any non zero Hopf $*$-ideal. The previous proposition ensures that to show $H$ is inner linear, it is enough to check that ${\rm Ker}(\pi)$ does not contain
any non-zero Hopf ideal $I$ with $S(I)=I$. So let $I \subset {\rm Ker}(\pi)$
be such a Hopf ideal. It is clear that $I+I^*$ is a $*$-bi-ideal contained in
${\rm Ker}(\pi)$. Moreover $S(I+I^*) = S(I)+S(I^*)=S(I) + S^{-1}(I)^* = I+I^*$.
Hence $I+I^*$ is a Hopf $*$-ideal contained in ${\rm Ker}(\pi)$ and $I+I^*=(0)=I$.
\end{proof}

It is well-known that, already at the discrete  group level, the converse of this result is not true.
 For example the group $\Gamma = \langle x, y \ | \ yxy^{-1} =x^2 \rangle$
 is linear but is not unitary.
For non cocommutative Hopf algebras, we also have
the following example.

\begin{proposition}
Let $G$ be a connected, simply connected and simple complex Lie group and let
$K \subset G$ be a maximal compact subgroup.
 Let $q \in \mathbb R^*$, $q \not=\pm1$. Then the compact Hopf algebra $\mathcal O_q(K)$
is inner linear but is not inner unitary.
\end{proposition}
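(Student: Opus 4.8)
The plan is to prove the two halves of the statement separately, using the machinery already developed.

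\medskip

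\textbf{Inner linearity of $\mathcal O_q(K)$.} First I would observe that $\mathcal O_q(K)$ is, as a Hopf algebra, essentially the same object as $\mathcal O_q(G)$ (with $G$ the complexification of $K$): more precisely, $\mathcal O_q(K)$ is $\mathcal O_q(G)$ equipped with a $*$-structure, and in particular they coincide as Hopf algebras. Since $q \in \mathbb R^*$ is not a root of unity, the previous theorem on Drinfeld–Jimbo algebras at generic $q$ applies and shows that $\mathcal O_q(G)$, hence $\mathcal O_q(K)$, is inner linear. (Alternatively, one invokes directly that $\mathcal O_q(K)$ is cosemisimple with simple comodules indexed by the dominant weights, that $U_q(\mathfrak g)^0$ separates points, and that $\mathcal O_q(K)$ is the finitely generated Hopf subalgebra of $U_q(\mathfrak g)^0$ spanned by the matrix coefficients, so Theorem \ref{reformulation} applies.)

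\medskip

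\textbf{Failure of inner unitarity.} This is the substantive part. By Proposition \ref{iuil}, $\mathcal O_q(K)$ has a regular antipode (it is cosemisimple, being a compact Hopf algebra), so it suffices to show there is \emph{no} $*$-representation $\pi : \mathcal O_q(K) \to A$ into a finite-dimensional $C^*$-algebra with $\Ker(\pi)$ containing no nonzero Hopf $*$-ideal. The strategy is to show instead that \emph{every} finite-dimensional $*$-representation of $\mathcal O_q(K)$ has a large kernel, namely one containing a nonzero Hopf $*$-ideal. A finite-dimensional $*$-representation $\pi : \mathcal O_q(K) \to A \subseteq B(W)$ with $W$ a finite-dimensional Hilbert space is the same as a unitary corepresentation structure turned around; equivalently, since $\mathcal O_q(K)$ has a distinguished set of unitary generators coming from a faithful corepresentation, $\pi$ is determined by the images $\pi(u_{ij})$ of matrix coefficients and these must satisfy the defining relations of $\mathcal O_q(K)$ in a $C^*$-algebra. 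The key point is that the relations of $\mathcal O_q(K)$ for $q \neq \pm 1$ real force any such $C^*$-algebraic representation to factor through a quotient on which $q$ effectively becomes $1$: the $q$-commutation relations among the $u_{ij}$, together with the $*$-relations and the fact that in a $C^*$-algebra the generators are bounded (in fact, as entries of a unitary matrix, of norm $\le 1$), imply that the relevant off-diagonal generators act as $0$, so $\pi$ kills the Hopf $*$-ideal generated by those elements, which is nonzero because $q \neq \pm 1$.

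\medskip

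Concretely, I would carry this out in the rank-one case first, where $\mathcal O_q(K)$ is $\mathcal O_q(\SU(2))$ (or a cover), generated by $a,c$ with $ca = qac$, $a a^* + q^2 c c^* = 1$, $a^*a + c^*c = 1$, $c c^* = c^* c$. In a finite-dimensional $C^*$-algebra one checks, using that the defining $2\times2$ matrix is unitary hence all generators have norm $\le 1$ and the finite-dimensionality forces a trace argument: applying a faithful trace $\tau$ to $a^*a + c^*c = 1$ and to $aa^* + q^2 cc^* = 1$ and subtracting gives $(1-q^2)\tau(cc^*) = 0$, so $\tau(cc^*)=0$, whence $c = 0$ since $\tau$ is faithful. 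Thus $\pi(c)=0$, so $\Ker(\pi)$ contains the Hopf $*$-ideal generated by $c$, which is nonzero. For general $K$ the same idea applies: I would use the root datum to find, for each positive root, a ``quantum $\SL_2$'' worth of generators with an analogous $q$-deformed unitarity relation, run the same trace argument in the finite-dimensional $C^*$-algebra $A$ to conclude the off-diagonal quantum root vectors act as zero, and observe that the Hopf $*$-ideal they generate is nonzero since $q$ is not a root of unity. The main obstacle is this last step: carefully identifying enough relations in $\mathcal O_q(K)$ that combine a genuine $(1-q^{2})$-type coefficient with a positivity/trace obstruction in the finite-dimensional $C^*$-algebra, and verifying that the resulting ideal is a nonzero Hopf $*$-ideal --- this requires a concrete hold on the presentation of $\mathcal O_q(K)$ (via the $R$-matrix / FRT relations or via the quantum coordinate generators), and the reduction to $\SU(2)$-type subalgebras has to be done with care because these are subalgebras, not quotients.
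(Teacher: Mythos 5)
Your first half (inner linearity) is exactly the paper's argument: the Hopf algebra underlying $\mathcal O_q(K)$ is $\mathcal O_q(G)$ and $q$ is not a root of unity, so Theorem 2.4 applies. For the second half, your rank-one computation is correct and is a genuinely elementary argument: a finite-dimensional $C^*$-algebra carries a faithful trace $\tau$, and applying $\tau$ to $a^*a+c^*c=1$ and $aa^*+q^2c^*c=1$ gives $(1-q^2)\tau(c^*c)=0$, hence $\pi(c)=0$, and the $*$-ideal generated by $c$ is indeed a nonzero Hopf $*$-ideal (it is the kernel of the map onto the torus). Incidentally, the appeal to Proposition \ref{iuil} and regular antipodes is unnecessary here: to refute inner unitarity you only need the definition, i.e.\ that every finite-dimensional $*$-representation kills a nonzero Hopf $*$-ideal.

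The genuine gap is the passage to general $K$, and it is exactly the step you flag yourself. There is no readily available family of ``quantum $\SU(2)$'s worth of generators'' sitting inside $\mathcal O_q(K)$ on which to run the trace argument: the natural rank-one objects attached to the simple roots are \emph{quotient} Hopf $*$-algebras $\mathcal O_q(K)\twoheadrightarrow \mathcal O_{q_i}(\SU(2))$ (quantum subgroups), not subalgebras, and a finite-dimensional $*$-representation of $\mathcal O_q(K)$ neither factors through these quotients nor restricts from them, so the $(1-q^2)$-trace trick does not transfer; moreover, for general $G$ one must identify, from an explicit presentation, relations combining a genuine $q$-deformation with a positivity constraint, and then check that the elements forced to vanish generate a nonzero Hopf $*$-ideal. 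What you are attempting to prove by hand is, in substance, a consequence of Soibelman's highest-weight analysis of $*$-representations of $\mathcal O_q(K)$, and this is precisely how the paper proceeds: it cites Korogodski--Soibelman for the fact that all finite-dimensional irreducible Hilbert-space representations of $\mathcal O_q(K)$ are one-dimensional. Then, since a finite-dimensional $*$-representation $\pi$ is semisimple, its image is simultaneously diagonalizable, hence commutative, so the commutator ideal of $\mathcal O_q(K)$ --- a Hopf $*$-ideal, nonzero because $\mathcal O_q(K)$ is noncommutative --- lies in $\Ker(\pi)$, contradicting inner unitarity. So either cite that classification (as the paper does), or accept that your route requires reproving a nontrivial piece of it beyond rank one; as written, the general case is asserted but not established.
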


\begin{proof}
 The Hopf algebra underlying $\mathcal O_q(K)$ is $\mathcal O_q(G)$, hence
is inner linear by Theorem 2.4. Assume that $\mathcal O_q(K)$ is inner unitary:
there exists a $*$-algebra morphism $$\pi :\mathcal O_q(K)\longrightarrow \mathcal
B(V)$$
where $V$ is a finite-dimensional Hilbert space, such that  ${\rm Ker}(\pi)$ does not contain
any non zero Hopf $*$-ideal. The $\mathcal O_q(K)$-module $V$ is semisimple
since $\pi$ is a $*$-algebra map. We know from \cite{koso}
that the finite-dimensional irreducible
Hilbert space representations  of $\mathcal O_q(K)$ all are one-dimensional, and
hence the elements of $\pi(\mathcal O_q(K))$ are simultaneously
diagonalizable. It follows that $\pi(\mathcal O_q(K))$ is a commutative $*$-algebra
and that the commutator ideal of $\mathcal O_q(K)$, which is a Hopf $*$-ideal,
is contained in ${\rm Ker}(\pi)$. Hence the commutator ideal is zero
and $\mathcal O_q(K)$ is commutative: a contradiction (these last arguments
are from Proposition 2.13 in \cite{bb}).
\end{proof}

\begin{remark}
 The Hopf $*$-algebra $\mathcal O_{-1}({\rm SU}_2)$ is inner unitary, since
 the inner faithful representation $\mathcal O_{-1}({\rm SU}_2) \rightarrow M_2(\C) \otimes \C^4$
 constructed in \cite{bb}, Corollary 6.6, is a $*$-algebra map. Also
 one can adapt the other constructions in Section 6 of \cite{bb}
 to show that $ \mathcal O_{-1}({\rm SU}_n)$ is inner linear for any $n$.
\end{remark}

It is now natural to ask if the compact Hopf $*$-algebra $A_o^*(n)$
(whose $*$-structure is defined by $u_{ij}^*=u_{ij}$) is inner unitary.
For this we need  a Hopf $*$-algebra version of Theorem \ref{extension}.

\begin{theorem}\label{extensionunit}
 Let $H$ be a compact Hopf algebra and let $A\subset H$ be a normal Hopf $*$-subalgebra.
 Assume that the following conditions hold:
 \begin{enumerate}
  \item
 $A$ is inner unitary and commutative,
 \item  $H$ is finitely generated as a right $A$-module.
  \end{enumerate}
 Then $H$ is inner unitary.
\end{theorem}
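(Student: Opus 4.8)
The plan is to mimic the proof of Theorem \ref{extension}, upgrading every construction to the $*$-setting. We start with an inner unitary $*$-representation $\rho : A \to \mathcal B(V)$ on a finite-dimensional Hilbert space $V$, so that $\Ker(\rho)$ contains no nonzero Hopf $*$-ideal. As in the proof of Theorem \ref{extension}, since $H$ is finitely generated as a right $A$-module, the induced representation $\tilde\rho : H \to \End(H\otimes_A V)$ is finite-dimensional, and $H//A$ is a finite-dimensional compact Hopf algebra, hence a $C^*$-algebra (equivalently, cosemisimple). We then form
\begin{align*}
 \theta : H &\longrightarrow H//A \times \End(H\otimes_A V) \\
 x &\longmapsto (p_H(x), \tilde\rho(x)).
\end{align*}
The first point is that, after choosing the Hilbert space structure on $H\otimes_A V$ appropriately, $\theta$ can be made a $*$-representation into a finite-dimensional $C^*$-algebra; on the $H//A$ factor this is automatic since $H//A$ is a compact Hopf algebra, and on the induced factor one uses that $A$ is commutative together with the standard fact (as in the cosemisimple/compact setting) that the $A$-module structure underlying the $*$-representation $\rho$ lets one put a Hilbert module structure on $H\otimes_A V$ compatible with the $H$-action, so that $\tilde\rho$ is a $*$-representation.

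Next I would show $\theta$ satisfies the defining property of inner unitarity. Let $J \subset \Ker(\theta) = A^+H \cap \Ker(\tilde\rho)$ be a Hopf $*$-ideal. Exactly as in the proof of Theorem \ref{extension}, using faithful flatness of $H$ as a right $A$-module (Lemma \ref{exactcommutative} and \cite[Proposition 3.12]{ag}) one gets $\Ker(\tilde\rho)\cap A \subset \Ker(\rho)$, hence $J\cap A$ is a Hopf ideal of $A$ contained in $\Ker(\rho)$; moreover $J\cap A$ is a Hopf $*$-ideal since $J$ and $A$ are $*$-stable, so $J\cap A = (0)$ by inner unitarity of $\rho$. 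We are thus reduced to the situation $J \subset A^+H$, $J\cap A = (0)$, and now I would invoke the purely algebraic Lemma used in the proof of Theorem \ref{extension} (the one with the $5$-lemma argument, Lemmas \ref{5lemma} and the subsequent one): it asserts that under these hypotheses $J = (0)$, with no $*$-structure needed. Hence $\theta$ is inner unitary and $H$ is inner unitary.

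The main obstacle I expect is the first step: verifying that the induced representation $\tilde\rho$ is genuinely a $*$-representation into a finite-dimensional $C^*$-algebra, i.e. that $H\otimes_A V$ carries a Hilbert space structure for which $\tilde\rho(x)^* = \tilde\rho(x^*)$. The algebraic part of the argument (inner faithfulness of $\theta$) transfers verbatim from Theorem \ref{extension} once one checks that intersecting with $A$ and the operations involved preserve $*$-ideals, which is routine; it is the analytic compatibility of induction with the $*$-structures, relying on compactness of $H$ and commutativity of $A$, that requires care. One convenient route is to note that $H$, being compact, is cosemisimple, so $V$ as an $A$-comodule decomposes into unitary corepresentations and $H\otimes_A V$ inherits a positive-definite inner product from the Haar state of $H$ restricted along the inclusion; with that inner product $\tilde\rho$ is a $*$-map, completing the proof.
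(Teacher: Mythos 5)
Your overall strategy coincides with the paper's: induce $\rho$ from $A$ to $H$, pair it with the projection onto the finite-dimensional quotient $H//A$, and rerun the Hopf-ideal argument of Theorem \ref{extension}, noting that intersecting with $A$ preserves $*$-ideals. That algebraic part of your proposal is correct. The genuine gap is exactly at the step you flag as the ``main obstacle'', and your sketch of how to resolve it would not work as written. First, $V$ is an $A$-\emph{module} via the $*$-representation $\rho$, not an $A$-comodule, so ``$V$ as an $A$-comodule decomposes into unitary corepresentations'' is not meaningful here; and the ``Haar state of $H$ restricted along the inclusion'' does not obviously produce an $A$-balanced, $H$-compatible inner product on $H\otimes_A V$. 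What the paper actually uses is Rieffel's theory of induced representations of $C^*$-algebras: since $A$ is commutative, $H$ is faithfully flat over $A$ and $A=H^{\co p}$, so the Haar state $\phi$ of the finite-dimensional compact Hopf algebra $H//A$ yields a conditional expectation $E=(\id\otimes\phi)\circ(\id\otimes p)\circ\Delta:H\to A$, and the pre-inner product is $\langle x\otimes_A v,\,y\otimes_A w\rangle=\langle\rho(E(y^*x))v,w\rangle$.

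Second, this form is in general only positive semi-definite, so one must quotient by the null vectors to obtain the Hilbert space $H\underline{\otimes}_A V$ carrying the induced $*$-representation $\tilde\rho$; your assumption that $H\otimes_A V$ itself can be given a compatible Hilbert space structure is unjustified. This matters for the next step as well: once $\tilde\rho$ acts on the quotient $H\underline{\otimes}_A V$ rather than on $H\otimes_A V$, your claim that $\Ker(\tilde\rho)\cap A\subset\Ker(\rho)$ follows ``exactly as in Theorem \ref{extension}'' by faithful flatness no longer applies verbatim, because elements of $A$ could conceivably act by operators that vanish only modulo the null space. The paper replaces that argument by the observation that $v\mapsto 1\underline{\otimes}_A v$ is isometric (since $E$ is a conditional expectation with $E(1)=1$, the form restricted to $1\otimes V$ is the original inner product of $V$), which immediately gives $\Ker(\tilde\rho)\cap A\subset\Ker(\rho)$. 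With the conditional expectation, Rieffel's induction, and this isometry supplied, the rest of your argument goes through and agrees with the paper's proof.
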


\begin{proof}
 The proof is an adaptation of the proof of Theorem \ref{extension}, using induced
 representations of $C^*$-algebras \cite{rie}.
 Similarly to Section 4,
 $H$ is a faithfully flat $A$-module since $A$ is commutative, and hence
 $H^{\co p}=A$, where $p : H \longrightarrow H//A$ is the canonical map. Hence
 using the Haar measure $\phi$ on the compact Hopf algebra $H//A$, we
 get a map
 $$E = ({\rm id}\otimes \phi) \circ ({\rm id} \otimes p) \circ \Delta : H \longrightarrow A$$
 which is a conditional expectation in the sense of \cite{rie} (see e.g. \cite{po}).

 Consider now a Hilbert space $*$-representation $\rho : A \to \mathcal B (V)$.
 There is a sesquilinear form on $H\otimes_A V$ such that for $x,y \in H$, $v, w \in V$,
 we have
 $$\langle x \otimes_A v, y \otimes_A w \rangle = \langle \rho(E(y^*x))(v) ,w \rangle$$
Moreover this is a pre-inner product (Lemma 1.7 in \cite{rie}). Killing the norm zero
elements, we get a (finite-dimensional) Hilbert space $H\underline{\otimes}_A V$, and
an induced $*$-representation (Theorem 1.8 in \cite{rie})
\begin{align*}
 \tilde{\rho} : H & \longrightarrow \mathcal B(H\underline{\otimes}_AV) \\
 x & \longmapsto \tilde{\rho(x)}, \ \tilde{\rho(x)}(y \underline{\otimes}_A v) = xy  \underline{\otimes}_A v
\end{align*}
We wave     ${\rm Ker}(\tilde{\rho}) \cap A \subset {\rm Ker}(\rho)$
since the map $V \rightarrow H \underline{\otimes}_A V$,  $v \mapsto1 \underline{\otimes}_A v$,
is isometric.
 Then, similarly to the proof of Theorem \ref{extension}, if ${\Ker}(\rho)$ does not contain
 any non zero Hopf $*$-ideal, the kernel of the $*$-representation
 \begin{align*}
 \theta : H &\longrightarrow H//A \times \mathcal B(H\underline{\otimes}_AV) \\
 x &\longmapsto (p(x), \tilde{\rho}(x))
\end{align*}
does not contain any non-zero Hopf $*$-ideal, and hence $H$ is inner unitary.
\end{proof}

\begin{corollary}
 The compact Hopf $*$-algebra $A_o^*(n)$ is inner unitary.
\end{corollary}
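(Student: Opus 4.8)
The plan is to apply Theorem \ref{extensionunit} to the very same normal Hopf subalgebra $A \subset H = A_o^*(n)$ that was used to prove that $A_o^*(n)$ is inner linear, namely the subalgebra generated by the products $u_{ij}u_{kl}$. First I would record the facts already established, together with the $*$-structure. Since $u_{ij}^* = u_{ij}$, we have $(u_{ij}u_{kl})^* = u_{kl}u_{ij} \in A$, so $A$ is a Hopf $*$-subalgebra of $H$. Moreover $H = A_o^*(n)$ is a compact Hopf algebra, because the matrix $u = (u_{ij})$ is a unitary corepresentation: one has $u^* = u^t$ and $u$ is orthogonal. The Hopf $*$-algebra map $\pi : H \to \C[\Z_2]$, $u_{ij} \mapsto \delta_{ij}g$, is cocentral and satisfies $A = H^{\co \pi}$, whence $A$ is normal in $H$; and, via the $\Z_2$-grading induced by $\pi$, the algebra $H$ is generated as a right $A$-module by $1$ and the finitely many elements $u_{ij}$, so $H$ is finitely generated over $A$. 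This settles all the hypotheses of Theorem \ref{extensionunit} except the inner unitarity of $A$.

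So the one genuinely new point is the inner unitarity of the commutative algebra $A$. Being a commutative and finitely generated compact Hopf $*$-subalgebra of $H$, the algebra $A$ is the algebra $\mathcal O(K)$ of representative functions of a compact Lie group $K$; by \cite{bv} one has $K = \mathrm{PU}(n)$. Since a compact Lie group is topologically finitely generated (two elements already suffice for the connected group $\mathrm{PU}(n)$), one may choose points $x_1, \dots, x_m \in K$ generating a dense subgroup. Then the evaluation $*$-homomorphism $A = \mathcal O(K) \longrightarrow \C^m$, $f \longmapsto (f(x_1), \dots, f(x_m))$, into the finite-dimensional commutative $C^*$-algebra $\C^m$ has kernel containing no nonzero Hopf $*$-ideal: such an ideal would be the vanishing ideal $I_L$ of a proper closed subgroup $L \subsetneq K$, and the inclusion $I_L \subseteq \Ker$ would force all the $x_i$ to lie in $L$, contradicting the density of the subgroup they generate. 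Hence $A$ is inner unitary.

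With both hypotheses of Theorem \ref{extensionunit} in place, we conclude that $H = A_o^*(n)$ is inner unitary. The main obstacle — and it is a mild one — is precisely the middle step: passing from the abstract statement ``$A$ is a commutative compact Hopf $*$-algebra'' to an explicit finite-dimensional inner faithful $*$-representation, which is done by identifying $A$ with the function algebra of a compact Lie group and invoking topological finite generation of that group. Everything else is a direct transcription of the inner-linearity argument for $A_o^*(n)$, with Theorem \ref{extension} replaced by its $*$-analogue Theorem \ref{extensionunit}.
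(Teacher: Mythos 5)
Your argument is correct and is exactly the intended one: the paper states this corollary without proof, as an immediate application of Theorem \ref{extensionunit} to the same normal commutative Hopf ($*$-)subalgebra $A$ generated by the $u_{ij}u_{kl}$ that was used for inner linearity. Your extra step — establishing inner unitarity of $A=\mathcal O(K)$ via evaluation at a topologically generating finite set of the compact group $K$, so that any Hopf $*$-ideal in the kernel corresponds to a proper closed subgroup and must vanish — is precisely the compact analogue of the inner linearity of $\mathcal O(G)$ from \cite{bb} that the authors take for granted, and it is correctly carried out.
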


\end{document}